\documentclass[10pt]{amsart}


\newcounter{minutes}\setcounter{minutes}{\time}
\divide\time by 60
\newcounter{hours}\setcounter{hours}{\time}
\multiply\time by 60
\addtocounter{minutes}{-\time}


\usepackage{amsfonts}
\usepackage{ifthen}
\usepackage{amsthm}
\usepackage{amsmath}
\usepackage{graphicx}
\usepackage{amscd,amssymb}

\setlength{\paperwidth}{210mm} \setlength{\paperheight}{297mm}
\setlength{\oddsidemargin}{0mm} \setlength{\evensidemargin}{0mm}
\setlength{\topmargin}{-20mm} \setlength{\headheight}{10mm}
\setlength{\headsep}{13mm} \setlength{\textwidth}{160mm}
\setlength{\textheight}{240mm} \setlength{\footskip}{15mm}
\setlength{\marginparwidth}{0mm} \setlength{\marginparsep}{0mm}

\newcommand{\doo}{\operatorname{d\!}}


\title{Functional inequalities for modified Bessel functions} 



\author{\'Arp\'ad Baricz}
\author{Saminathan Ponnusamy}
\author{Matti Vuorinen}
\address{Department of Economics, Babe\c{s}-Bolyai University,
Cluj-Napoca 400591, Romania} \email{bariczocsi@yahoo.com}
\address{Department of Mathematics, Indian Institute of Technology Madras, Chennai 600036,
India} \email{samy@iitm.ac.in}
\address{Department of Mathematics, University of Turku, Turku 20014,
Finland} \email{vuorinen@utu.fi}


\newtheorem{theorem}{Theorem}
\newtheorem{open}{Question}

\newcounter{alphabet}
\newcounter{tmp}
\newenvironment{Theorem}[1][]{\refstepcounter{alphabet}%
\bigskip%
\noindent%
{\bf Theorem \Alph{alphabet}}%
\ifthenelse{\equal{#1}{}}{}{ (#1)}%
{\bf .} \itshape}{\vskip 8pt}

\newenvironment{Lemma}[1][]{\refstepcounter{alphabet}%
\bigskip%
\noindent%
{\bf Lemma \Alph{alphabet}}%
\ifthenelse{\equal{#1}{}}{}{ (#1)}%
{\bf .} \itshape}{\vskip 8pt}
\keywords{Functional inequalities; Modified Bessel functions;
Convexity with respect to H\"older means; Log-convexity; Geometrical
convexity; Gamma-gamma distribution; Tur\'an-type inequality. \newline  \small \texttt{File:~\jobname .tex,
          printed: \number\year-0\number\month-0\number\day,
          \thehours.\ifnum\theminutes<10{0}\fi\theminutes} }
\subjclass[2000]{39B62, 33C10, 62H10.}   

\begin{document}

\maketitle


\begin{abstract}
In this paper our aim is to show some mean value
inequalities for the modified Bessel functions of the first and
second kinds. Our proofs are based on some bounds for the
logarithmic derivatives of these functions, which are in fact
equivalent to the corresponding Tur\'an type inequalities for these
functions. As an application of the results concerning the modified
Bessel function of the second kind we prove that the cumulative
distribution function of the gamma-gamma distribution is
log-concave. At the end of this paper several open problems are
posed, which may be of interest for further research.
\end{abstract}



\section{\bf Introduction}

Let us consider the probability density function
$\varphi:\mathbb{R}\rightarrow(0,\infty)$ and the reliability (or
survival) function $\overline{\Phi}:\mathbb{R}\rightarrow(0,1)$ of
the standard normal distribution, defined by
$$\varphi(u)=\frac{1}{\sqrt{2\pi}}e^{-u^2/2} \ \ \ \ \ \mbox{and} \ \ \ \ \
\overline{\Phi}(u)=\frac{1}{\sqrt{2\pi}}\int_{u}^{\infty}e^{-t^2/2}\doo
t.$$ The function $r:\mathbb{R}\rightarrow(0,\infty),$ defined by
$$r(u)=\frac{\overline{\Phi}(u)}{\varphi(u)}=e^{u^2/2}\int_u^{\infty}e^{-t^2/2}\doo t,$$
is known in literature as Mills' ratio \cite[sect. 2.26]{mitri} of
the standard normal distribution, while its reciprocal $1/r,$
defined by $1/r(u)=\varphi(u)/\overline{\Phi}(u),$ is the so-called
failure (hazard) rate, which arises frequently in economics and
engineering sciences. Recently, among other things, Baricz
\cite[Corollary 2.6]{bariczmills} by using the Pinelis' version of
the monotone form of l'Hospital's rule (see
\cite{pinelis,andersonH,avv1} for further details) proved the
following result concerning the Mills ratio of the standard normal
distribution:

\begin{Theorem}\label{theoA}
If $u_1,u_2>u_0,$ where $u_0\approx1.161527889\dots$ is the
unique positive root of the transcendent equation
$u(u^2+2)\overline{\Phi}(u)=(u^2+1)\varphi(u),$ then the following
chain of inequalities holds
\begin{equation}\label{funci}\frac{2r(u_1)r(u_2)}{r(u_1)+r(u_2)}\leq
r\left(\frac{u_1+u_2}{2}\right)\leq\sqrt{r(u_1)r(u_2)}\leq
r(\sqrt{u_1u_2}) \leq\frac{r(u_1)+r(u_2)}{2}\leq
r\left(\frac{2u_1u_2}{u_1+u_2}\right).\end{equation} Moreover, the
first, second, third and fifth inequalities hold for all $u_1,u_2$
positive real numbers, while the fourth inequality is reversed if
$u_1,u_2\in(0,u_0).$ In each of the above inequalities equality
holds if and only if $u_1=u_2.$
\end{Theorem}

We note here that, since Mills' ratio $r$ is continuous, the second
and third inequalities in \eqref{funci} mean actually that under the
aforementioned assumptions Mills' ratio is log-convex and
geometrically concave on the corresponding interval. More precisely,
by definition a function $f:[a,b]\subseteq\mathbb{R}\to(0,\infty)$
is log-convex if $\ln f$ is convex, i.e. if for all
$u_1,u_2\in[a,b]$ and $\lambda\in[0,1]$ we have
$$f(\lambda u_1+(1-\lambda)u_2)\leq \left[f(u_1)\right]^{\lambda}\left[f(u_2)\right]^{1-\lambda}.$$
Similarly, a function $g:[a,b]\subseteq(0,\infty)\to(0,\infty)$ is
said to be geometrically (or multiplicatively) convex if $g$ is
convex with respect to the geometric mean, i.e. if for all
$u_1,u_2\in[a,b]$ and $\lambda\in[0,1]$ we have
$$g\left(u_1^\lambda u_2^{1-\lambda}\right)\leq \left[g(u_1)\right]^{\lambda}\left[g(u_2)\right]^{1-\lambda}.$$
We note that if $f$ and $g$ are differentiable then $f$ is
log-convex if and only if $u\mapsto f'(u)/f(u)$ is increasing on
$[a,b]$, while $g$ is geometrically convex if and only if $u\mapsto
ug'(u)/g(u)$ is increasing on $[a,b].$ A similar definition and
characterization of differentiable log-concave and geometrically
concave functions also holds.

Mean value inequalities similar to those presented above appear also
in the recent literature explicitly or implicitly for other special
functions, like the Euler gamma function and its logarithmic
derivative (see for example the paper \cite{alzer} and the
references therein), the Gaussian and Kummer hypergeometric
functions, generalized Bessel functions of the first kind, general
power series (see the papers \cite{anderson,bariczj1,bariczj2}, and
the references therein), Bessel and modified Bessel functions of the
first kind (see \cite{bariczexpo,neuman,eneuman}).

In this paper, motivated by the above results, we are mainly
interested in mean value functional inequalities concerning modified
Bessel functions of the first and second kinds. The detailed content
is as follows: in section 2 we present some preliminary results
concerning some tight lower and upper bounds for the logarithmic
derivative of the modified Bessel functions of the first and second
kinds. These results will be applied in the sequel to obtain some
interesting chain of inequalities for modified Bessel functions of
the first and second kinds analogous to \eqref{funci}. To achieve
our goal in section 2 we present some monotonicity properties of
some functions which involve the modified Bessel functions of the
first and second kinds. Section 3 is devoted to the study of the
convexity with respect to H\"older (or power) means of modified
Bessel functions of the first and second kinds. The results stated
here complete and extend the results from section 2. As an
application of our results stated in section 2, in section 4 we show
that the cumulative distribution function of the three parameter
gamma-gamma distribution is log-concave for arbitrary shape
parameters. This result may be useful in problems of information
theory and communications. Finally, in section 5 we present some
interesting open problems, which may be of interest for further
research.



\section{\bf Monotonicity properties of some functions involving modified Bessel functions}

As usual, in what follows let us denote by $I_{\nu}$ and $K_{\nu}$
the modified Bessel functions of the first and second kinds of real
order $\nu$ (see \cite{watson}), which are in fact the linearly
independent particular solutions of the second order modified Bessel
homogeneous linear differential equation \cite[p. 77]{watson}
\begin{equation}\label{diffeq}
u^2v''(u)+uv'(u)-(u^2+\nu^2)v(u)=0.
\end{equation}
Recall that the modified Bessel function $I_{\nu}$ of the first kind
has the series representation \cite[p. 77]{watson}
$$I_{\nu}(u)=\sum_{n\geq0}\frac{(u/2)^{2n+\nu}}{n!\Gamma(n+\nu+1)},$$
where $\nu\neq-1,-2,\dots$ and $u\in\mathbb{R},$ while the modified
Bessel function of the second kind $K_{\nu}$ (called sometimes as
the MacDonald or Hankel function), is usually defined also as
\cite[p. 78]{watson}
$$K_{\nu}(u)=\frac{\pi}{2}\frac{I_{-\nu}(u)-I_{\nu}(u)}{\sin\nu\pi},$$
where the right-hand side of this equation is replaced by its
limiting value if $\nu$ is an integer or zero. We note that for all
$\nu$ natural and $u\in\mathbb{R}$ we have $I_{\nu}(u)=I_{-\nu}(u),$
and from the above series representation $I_{\nu}(u)>0$ for all
$\nu>-1$ and $u>0.$ Similarly, by using the familiar integral
representation \cite[p. 181]{watson}
\begin{equation}\label{integr}K_{\nu}(u)=\int_0^{\infty}e^{-u\cosh
t}\cosh(\nu t)\doo t,\end{equation} which holds for each $u>0$ and
$\nu\in\mathbb{R},$ one can see easily that $K_{\nu}(u)>0$ for all
$u>0$ and $\nu\in\mathbb{R}.$

The following results provide some tight lower and upper bounds for
the logarithmic derivatives of the modified Bessel functions of the
first and second kinds $I_{\nu}$ and $K_{\nu}$ and will be used
frequently in the sequel.

\begin{Lemma}\label{lem1}
For all $u>0$ and $\nu>0$ the following inequalities hold
\begin{equation}\label{eq4}
\sqrt{\frac{\nu}{\nu+1}u^2+\nu^2}<\frac{uI_{\nu}'(u)}{I_{\nu}(u)}<\sqrt{u^2+\nu^2}.
\end{equation}
Moreover, the right-hand side of \eqref{eq4} holds true for all
$\nu>-1.$
\end{Lemma}

\begin{Lemma}\label{lem2}
For all $u>0$ and $\nu>1$ the following inequalities hold
\begin{equation}\label{eq1}
-\sqrt{\frac{\nu}{\nu-1}u^2+\nu^2}<\frac{uK_{\nu}'(u)}{K_{\nu}(u)}<-\sqrt{u^2+\nu^2}.
\end{equation}
Moreover, the right-hand side of \eqref{eq1} holds true for all
$\nu\in\mathbb{R}.$
\end{Lemma}

The left-hand side of \eqref{eq4} was proved for $u>0$ and positive
integer $\nu$ by Phillips and Malin \cite{phillips}, and later by
Baricz \cite{baricz1} for $u>0$ and $\nu>0$ real. The right-hand
side of \eqref{eq4} appeared first in Gronwall's paper
\cite{gronwall} for $u>0$ and $\nu>0$ (motivated by a problem in
wave mechanics), it was proved also by Phillips and Malin
\cite{phillips} for $u>0$ and $\nu\geq 1$ integer, and recently by
Baricz \cite{baricz1} for $u>0$ and $\nu\geq -1/2$ real (motivated
by a problem in biophysics; see \cite{penfold}). For this inequality
the case $u>0$ and $\nu>-1$ real has been proved recently in
\cite{baricz2}.

The left-hand side of \eqref{eq1} was proved first by Phillips and
Malin \cite{phillips} for $u>0$ and $\nu>1$ positive integer, and
was extended to the case $u>0$ and $\nu>1$ real recently by Baricz
\cite{baricz2}. Finally, the right-hand side of \eqref{eq1} was
proved first by Phillips and Malin \cite{phillips} for $u>0$ and
$\nu\geq 1$ integer, and later extended to the case of $u>0$ and
$\nu$ real arbitrary by Baricz \cite{baricz1}.

It is worth mentioning that the inequalities \eqref{eq4} and
\eqref{eq1}, which have been proved recently also by Segura
\cite{segura}, are in fact equivalent to the Tur\'an type
inequalities for the modified Bessel functions of the first and
second kinds. For further details the interested reader is referred
to \cite{baricz1,baricz2,bapo,lana,segura} and to the references
therein.

Our first main result reads as follows.

\begin{theorem}\label{th1}
The following assertions are true:
\begin{enumerate}
\item[\bf (a)] $u\mapsto uI_{\nu}'(u)/I_{\nu}^2(u)$ is
strictly decreasing on $(0,\infty)$ for all $\nu\geq 1;$
\item[\bf (b)] $u\mapsto uI_{\nu}'(u)/I_{\nu}(u)$ is
strictly increasing on $(0,\infty)$ for all $\nu>-1;$
\item[\bf (c)] $u\mapsto \sqrt{u}I_{\nu}(u)$ is strictly
log-concave on $(0,\infty)$ for all $\nu\geq 1/2;$
\item[\bf (d)] $u\mapsto u^2I_{\nu}'(u)/I_{\nu}^2(u)$ is
strictly decreasing on $(0,\infty)$ for all $\nu\geq \nu_0,$ where
$\nu_0\approx1.373318506\dots$ is the positive root of the cubic
equation $8\nu^3-9\nu^2-2\nu-1=0.$
\end{enumerate}
In particular, for all $u_1,u_2>0$ and $\nu\geq \nu_0$ the following
chain of inequalities holds
\begin{equation}\label{chain1}\frac{2I_{\nu}(u_1)I_{\nu}(u_2)}{I_{\nu}(u_1)+I_{\nu}(u_2)}\leq
I_{\nu}\left(\frac{2u_1u_2}{u_1+u_2}\right)\leq
I_{\nu}\left(\sqrt{u_1u_2}\right)\leq
\sqrt{I_{\nu}(u_1)I_{\nu}(u_2)} \leq
\sqrt{\frac{u_1+u_2}{2\sqrt{u_1u_2}}}\cdot
I_{\nu}\left(\frac{u_1+u_2}{2}\right).\end{equation} Moreover, the
second and third inequalities hold true for all $\nu>-1,$ and the
fourth inequality holds true for all $\nu\geq 1/2.$ In each of the
above inequalities equality hold if and only if $u_1=u_2.$
\end{theorem}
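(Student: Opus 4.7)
The plan is to establish the four monotonicity claims (a)--(d) by differentiating each quotient, using the modified Bessel equation $u^2 I_\nu''(u) + u I_\nu'(u) = (u^2 + \nu^2) I_\nu(u)$ to eliminate the second derivative, and then invoking the sharp bounds \eqref{eq4} on $h(u) := u I_\nu'(u)/I_\nu(u)$; once (a)--(d) are in hand, each link of the chain \eqref{chain1} is obtained by interpreting one of them as a mean inequality. Parts (a) and (b) are the straightforward ones: the equation yields
\[
h'(u) = \frac{u^2+\nu^2-h(u)^2}{u}, \qquad \left(\frac{u I_\nu'(u)}{I_\nu(u)^2}\right)' = \frac{u^2+\nu^2 - 2h(u)^2}{u I_\nu(u)},
\]
and the two sides of \eqref{eq4} control the signs directly: the upper bound $h^2 < u^2+\nu^2$ gives (b) on the range $\nu > -1$, while the lower bound $h^2 > \nu u^2/(\nu+1) + \nu^2$ yields $2h^2 - (u^2+\nu^2) > (\nu-1)u^2/(\nu+1) + \nu^2$, which is positive for $\nu \geq 1$, proving (a).

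For (c) I would switch to the auxiliary function $v(u) := \sqrt{u}\, I_\nu(u)$, whose governing equation, derived from the Bessel equation, is $v''(u) = (1 + (\nu^2 - 1/4)/u^2)\, v(u)$. Log-concavity of $v$ is equivalent to non-negativity of $q(u) := v'(u)^2 - (1 + (\nu^2-1/4)/u^2)\, v(u)^2$. A direct differentiation using the equation for $v$ makes the leading terms telescope and leaves
\[
q'(u) = \frac{2(\nu^2-1/4)}{u^3}\, v(u)^2,
\]
which is non-negative for $\nu \geq 1/2$. Combined with $q(0^+) \geq 0$, read off from the series $v(u) \sim u^{\nu+1/2}/(2^\nu \Gamma(\nu+1))$ at the origin, this forces $q \geq 0$ throughout $(0,\infty)$ and hence (c).

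The real technical issue is part (d). The analogous computation expresses $(u^2 I_\nu'/I_\nu^2)'$ as a positive multiple of $h + u^2 + \nu^2 - 2h^2$, so the target inequality is $2h^2 - h > u^2 + \nu^2$. I would substitute the lower bound from \eqref{eq4} into $2h^2$ and use the upper bound $h < \sqrt{u^2+\nu^2}$ to control the stray linear term $-h$, reducing matters to the algebraic inequality
\[
\frac{\nu-1}{\nu+1}\, u^2 + \nu^2 \geq \sqrt{u^2+\nu^2}.
\]
Squaring (both sides are positive for $\nu \geq 1$) yields a quadratic in $y = u^2$, and requiring it to be non-negative on $[0,\infty)$ is a discriminant condition that, after a short algebraic simplification, collapses to precisely $8\nu^3 - 9\nu^2 - 2\nu - 1 \geq 0$, identifying $\nu_0$. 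The subtlety is that invoking only one side of \eqref{eq4} yields a strictly weaker threshold; both bounds must be combined in exactly this way to produce the cubic appearing in the statement.

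Finally, the chain \eqref{chain1} drops out by reading each link as a mean-type property of $I_\nu$. The first is equivalent to the convexity in $x$ of $g(x) := 1/I_\nu(1/x)$, and since $g'(x) = u^2 I_\nu'(u)/I_\nu(u)^2$ at $u = 1/x$, this convexity is equivalent to the function in (d) being non-increasing in $u$. The second is immediate from the monotonicity of $I_\nu$ on $(0,\infty)$ together with the classical harmonic--geometric inequality $2 u_1 u_2/(u_1+u_2) \leq \sqrt{u_1 u_2}$. The third is literally the geometric convexity supplied by (b). The fourth, after squaring and rearranging, reads $f((u_1+u_2)/2)^2 \geq f(u_1) f(u_2)$ with $f(u) := \sqrt{u}\, I_\nu(u)$, which is the log-concavity provided by (c). The equality cases $u_1 = u_2$ follow from the strictness of the bounds in \eqref{eq4} and the strict convexity or monotonicity at each step.
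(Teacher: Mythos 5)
Your proposal is correct, and for parts \textbf{(a)}, \textbf{(b)}, \textbf{(d)} and the derivation of the chain \eqref{chain1} it follows essentially the same route as the paper: the same reduction via \eqref{eq3} to the sign of $u^2+\nu^2-h-2h^2$ resp.\ $u^2+\nu^2-2h^2$ with $h=uI_\nu'/I_\nu$, the same combination of both sides of \eqref{eq4} in part \textbf{(d)} leading to the quadratic in $u^2$ whose discriminant condition is exactly $8\nu^3-9\nu^2-2\nu-1\geq 0$, and the same reading of each link of \eqref{chain1} as a mean-convexity property (your explicit substitution $g(x)=1/I_\nu(1/x)$ for the first link is just the concrete form of the paper's appeal to the general $(p,q)$-convexity criterion, i.e.\ Lemma \Ref{lem3} with $p=q=-1$). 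The genuine divergence is part \textbf{(c)}: the paper obtains the log-concavity of $\sqrt{u}\,I_\nu(u)$ by writing it as the product of $uI_\nu(u)K_\nu(u)$ and $1/(\sqrt{u}K_\nu(u))$, invoking Hartman's concavity of $uI_\nu K_\nu$ for $\nu>1/2$ and Duff's complete monotonicity (hence log-convexity) of $\sqrt{u}K_\nu$ for $|\nu|\geq 1/2$; you instead pass to the Liouville normal form $v''=(1+(\nu^2-1/4)/u^2)v$ for $v=\sqrt{u}\,I_\nu$ and show that $q=(v')^2-(1+(\nu^2-1/4)/u^2)v^2$ has derivative $2(\nu^2-1/4)u^{-3}v^2\geq 0$ and vanishing (or positive) limit at the origin, so $-(\ln v)''=q/v^2>0$. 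Your argument is self-contained and elementary, avoiding the two nontrivial external inputs on $K_\nu$ and $I_\nu K_\nu$; what the paper's route buys in exchange is the connection to those classical results and to the second-kind function, which it exploits elsewhere. (For completeness, in your version one should note that for $\nu>1/2$ one has $q(0^+)=0$ with $q'>0$ on $(0,\infty)$, while for $\nu=1/2$ one has $q\equiv 2/\pi>0$, so strictness holds in both cases.)
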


We recall that part {\bf (b)} of Theorem \ref{th1} was proved for
$\nu>0$ by Gronwall \cite{gronwall}. Notice also that recently
Baricz \cite{baricz2} in order to prove the right-hand side of
\eqref{eq4} proved implicitly part {\bf (b)} of Theorem \ref{th1}.
For reader's convenience we recall below that proof. Moreover, we
give a somewhat different proof of this part, and two other
completely different proofs.

We note that part {\bf (c)} of Theorem \ref{th1} improves the result
of Sun and Baricz \cite{sun}, who proved that the function $u\mapsto
uI_{\nu}(u)$ is log-concave on $(0,\infty)$ for all $\nu\geq 1/2.$
Recently, Baricz and Neuman \cite{neuman} conjectured that the
modified Bessel function $I_{\nu}$ of the first kind is strictly
log-concave on $(0,\infty)$ for all $\nu>0.$ As far as we know, this
conjecture is still open and the much sharper result of this kind is
of part {\bf (c)} of Theorem \ref{th1}.

\begin{proof}[\bf Proof of Theorem \ref{th1}]

First we prove the monotonicity and log-concavity properties stated
above.

{\bf (a)} Recall that the modified Bessel function of the first kind
$I_{\nu}$ is a particular solution of the second-order differential
equation \eqref{diffeq} and thus
\begin{equation}\label{eq3}I_{\nu}''(u)=(1+\nu^2/u^2)I_{\nu}(u)-(1/u)I_{\nu}'(u).\end{equation}
Using \eqref{eq3} and the left-hand side of \eqref{eq4}, we obtain
that for all $u>0$ and $\nu\geq 1$
\begin{align*}
\frac{\doo}{\doo
u}\left[\frac{uI_{\nu}'(u)}{I_{\nu}^2(u)}\right]=\left[\frac{1}{uI_{\nu}(u)}\right]
\left[u^2+\nu^2-2\left[\frac{uI_{\nu}'(u)}{I_{\nu}(u)}\right]^2\right]<\left[\frac{1}{uI_{\nu}(u)}\right]\left[{-\nu^2+\frac{1-\nu}{1+\nu}u^2}\right]\leq0.
\end{align*}

{\bf (b)} Consider the Tur\'anian
$$\Delta_{\nu}(u)=I_{\nu}^2(u)-I_{\nu-1}(u)I_{\nu+1}(u),$$ which in
view of the recurrence relations
$$I_{\nu-1}(u)=(\nu/u)I_{\nu}(u)+I_{\nu}'(u)$$ and
$$I_{\nu+1}(u)=-(\nu/u)I_{\nu}(u)+I_{\nu}'(u),$$ can be rewritten as
follows
$$\Delta_{\nu}(u)=(1+\nu^2/u^2)I_{\nu}^2(u)-[I_{\nu}'(u)]^2.$$ Using
\eqref{eq3} we get
$$\Delta_{\nu}(u)=\frac{1}{u}I_{\nu}^2(u)\left[\frac{uI_{\nu}'(u)}{I_{\nu}(u)}\right]'.$$
It is known (see \cite{tiru,baricz2}) that the Tur\'an-type
inequality $\Delta_{\nu}(u)>0$ holds for all $u>0$ and $\nu>-1,$ and
hence the required result follows. We may note incidentally that the
result of this part actually follows also from the right-hand side
of \eqref{eq4}. More precisely, it is easy to see that the function
$u\mapsto uI_{\nu}'(u)/I_{\nu}(u)$ satisfies the differential
equation $uv'(u)=u^2+\nu^2-v^2(u),$ and using the right-hand side of
\eqref{eq4} it is clearly strictly increasing on $(0,\infty)$ for
all $\nu>-1.$ It is important to add here that in fact the
right-hand side of \eqref{eq4} and the Tur\'an-type inequality
$\Delta_{\nu}(u)>0$ are equivalent (see \cite{baricz1,baricz2}).

A third proof of this part can be obtained as follows. By using the
infinite series representation of the modified Bessel function of
the first kind we just need to show that the function
$$u\mapsto \frac{uI_{\nu}'(u)}{I_{\nu}(u)}=\left.\sum_{n\geq0}\frac{(2n+\nu)(u/2)^{2n}}{n!\Gamma(\nu+n+1)}\right/\sum_{n\geq0}\frac{(u/2)^{2n}}{n!\Gamma(\nu+n+1)}$$
is strictly increasing on $(0,\infty)$ for all $\nu>-1.$ To do this
let us recall the following well-known result (see
\cite{biernacki,ponnusamy}): {\em Let us consider the power series
$f(u)=a_0+a_1u+{\dots}+a_nu^n+{\dots}$ and
$g(u)=b_0+b_1u+{\dots}+b_nu^n+{\dots},$ where for all $n\geq 0$
integer $a_n\in\mathbb{R}$ and $b_n>0,$ and suppose that both
converge on $(0,\infty).$ If the sequence $\{a_n/b_n\}_{n\geq 0}$ is
strictly increasing, then the function $u\mapsto f(u)/g(u)$ is
strictly increasing too on $(0,\infty).$} We note that we can see
easily that the above result remains true in the case of even
functions. Thus, to prove that $u\mapsto uI_{\nu}'(u)/I_{\nu}(u)$ is
indeed strictly increasing it is enough to show that the sequence
$\{\alpha_n\}_{n\geq0},$ defined by $\alpha_n=2n+\nu$ for all
$n\geq0,$ is strictly increasing, which is certainly true.

Finally, a fourth proof is as follows. By using the Weierstrassian
factorization
$$I_{\nu}(u)=\frac{u^{\nu}}{2^{\nu}\Gamma(\nu+1)}\prod_{n\geq 1}\left(1+\frac{u^2}{j_{\nu,n}^2}\right),$$
where $\nu>-1$ and $j_{\nu,n}$ is the $n$th positive zero of the
Bessel function $J_{\nu}$ of the first kind, we obtain that
$$\frac{\doo}{\doo u}\left[\frac{uI_{\nu}'(u)}{I_{\nu}(u)}\right]=\frac{\doo}{\doo u}\left[\nu+2\sum_{n\geq 1}\frac{u^2}{u^2+j_{\nu,n}^2}\right]=
4\sum_{n\geq 1}\frac{uj_{\nu,n}^2}{(u^2+j_{\nu,n}^2)^2}>0$$ for all
$u>0$ and $\nu>-1.$ We note that this proof reveals that the
function $u\mapsto uI_{\nu}'(u)/I_{\nu}(u)$ is in fact strictly
decreasing on $(-\infty,0)$ for all $\nu>-1.$ This is in the
agreement with the fact that the function $u\mapsto
uI_{\nu}'(u)/I_{\nu}(u)$ is even, as we can see in the above series
representations.

{\bf (c)} Owing to Duff \cite{duff} it is known that the function
$u\mapsto \sqrt{u}K_{\nu}(u)$ is strictly completely monotonic, and
consequently (see \cite[p. 167]{widder}) strictly log-convex on
$(0,\infty)$ for each $|\nu|\geq1/2.$ On the other hand, due to
Hartman \cite{hartman} the function $u\mapsto uI_{\nu}(u)K_{\nu}(u)$
is concave, and consequently log-concave on $(0,\infty)$ for all
$\nu>1/2.$ Since $u\mapsto 2uI_{1/2}(u)K_{1/2}(u)=1-e^{-2u}$ is
concave on $(0,\infty),$ we conclude that in fact the function
$u\mapsto uI_{\nu}(u)K_{\nu}(u)$ is concave, and hence log-concave
on $(0,\infty)$ for all $\nu\geq 1/2.$ Now, combining these results,
in view of the fact that the product of log-concave functions is
log-concave, the required result follows.

{\bf (d)} Using \eqref{eq4} and \eqref{eq3} we obtain that
\begin{align*}\frac{\doo}{\doo u}\left[\frac{u^2I_{\nu}'(u)}{I_{\nu}^2(u)}\right]&=\frac{1}{I_{\nu}(u)}
\left[u^2+\nu^2+\frac{uI_{\nu}'(u)}{I_{\nu}(u)}-2\left[\frac{uI_{\nu}'(u)}{I_{\nu}(u)}\right]^2\right]\\
&<\left[u^2+\nu^2+\sqrt{u^2+\nu^2}-2\left(u^2\frac{\nu}{\nu+1}+\nu^2\right)\right]\end{align*}
for all $u>0$ and $\nu>0.$ Observe that the last expression is
nonpositive if and only if we have
$$\left(\frac{\nu-1}{\nu+1}\right)^2u^4+\left(2\nu^2\frac{\nu-1}{\nu+1}-1\right)u^2+\nu^2(\nu^2-1)\geq0.$$
A computation shows that this is satisfied if
$$\left(2\nu^2\frac{\nu-1}{\nu+1}-1\right)^2-4\left(\frac{\nu-1}{\nu+1}\right)^2\nu^2(\nu^2-1)=-\frac{8\nu^3-9\nu^2-2\nu-1}{(\nu+1)^2}\leq0.$$
Now, since $\nu\geq \nu_0$ we have $8\nu^3-9\nu^2-2\nu-1\geq0$ and
thus the proof of part {\bf (d)} is complete.

It should be mentioned here that part {\bf (a)} of this theorem for
$\nu\geq \nu_0$ actually is an immediate consequence of this part.
More precisely, the proof of part {\bf (a)} of this theorem can be
simplified significantly as follows: in view of part {\bf (d)} of this
theorem, the function
$$u\mapsto \frac{uI_{\nu}'(u)}{I_{\nu}^2(u)}=\frac{1}{u}\cdot
\frac{u^2I_{\nu}'(u)}{I_{\nu}^2(u)}$$ is strictly decreasing as a
product of two positive and strictly decreasing functions.

Now, let us focus on the chain of inequalities \eqref{chain1}. To
prove this we use Corollary 2.5 from \cite{anderson}. More
precisely, the first inequality in \eqref{chain1} follows from part
{\bf (d)} of this theorem, while the second inequality in
\eqref{chain1} is an immediate consequence of the fact that
$I_{\nu}$ is a strictly increasing function on $(0,\infty)$ for all
$\nu>-1.$ The third inequality in \eqref{chain1} means actually the
strict geometrical convexity of $I_{\nu}$ and is equivalent to part
{\bf (b)} of this theorem; the fourth inequality is equivalent to
part {\bf (c)} of this theorem.

Finally, observe that part {\bf (a)} of this
theorem is equivalent to the inequality
$$\frac{2I_{\nu}(u_1)I_{\nu}(u_2)}{I_{\nu}(u_1)+I_{\nu}(u_2)}\leq
I_{\nu}\left(\sqrt{u_1u_2}\right),$$ which holds for all $u_1,u_2>0$
and $\nu\geq 1. $ Moreover, in this inequality equality holds if and
only if $u_1=u_2.$
\end{proof}

The following result is a companion of Theorem \ref{th1} for
modified Bessel functions of the second kind. We note that part {\bf
(b)} of the following theorem is well-known (see for example
\cite{giordano,sun,temme}), and part {\bf (c)} was proved by Baricz
\cite{baricz2}. For part {\bf (b)} we give here a different proof,
while for part {\bf (c)} we recall the proof from \cite{baricz2} and
we present a simple alternative proof.

\begin{theorem}\label{th2}
The following assertions are true:
\begin{enumerate}
\item[\bf (a)] $u\mapsto K_{\nu}'(u)/K_{\nu}^2(u)$ is
strictly decreasing on $(0,\infty)$ for all $|\nu|\geq 1;$
\item[\bf (b)] $u\mapsto K_{\nu}'(u)/K_{\nu}(u)$ is strictly increasing on
$(0,\infty)$ for all $\nu\in\mathbb{R};$
\item[\bf (c)] $u\mapsto uK_{\nu}'(u)/K_{\nu}(u)$ is
strictly decreasing on $(0,\infty)$ for all $\nu\in\mathbb{R};$
\item[\bf (d)] $u\mapsto uK_{\nu}'(u)$ is strictly
increasing on $(0,\infty)$ for all $\nu\in\mathbb{R};$
\item[\bf (e)] $u\mapsto u^2K_{\nu}'(u)$ is strictly
increasing on $(0,\infty)$ for all $|\nu|\geq 5/4;$
\item[\bf (f)] $u\mapsto u^2K_{\nu}'(u)$ is strictly
increasing on $(2,\infty)$ for all $\nu\in\mathbb{R}.$
\end{enumerate}
In particular, for all $u_1,u_2>0$ and $|\nu|\geq 1$ the following
chain of inequalities holds
\begin{equation}\label{chain2}\frac{2K_{\nu}(u_1)K_{\nu}(u_2)}{K_{\nu}(u_1)+K_{\nu}(u_2)}\leq
K_{\nu}\left(\frac{u_1+u_2}{2}\right)\leq
\sqrt{K_{\nu}(u_1)K_{\nu}(u_2)} \leq
K_{\nu}\left(\sqrt{u_1u_2}\right)\leq
\frac{K_{\nu}(u_1)+K_{\nu}(u_2)}{2}.\end{equation} Moreover, the
second, third and fourth inequalities hold true for all
$\nu\in\mathbb{R}.$ In addition, for $|\nu|\geq 5/4$ and $u_1,u_2>0$
the fourth inequality can be improved as
\begin{equation}\label{chain20}K_{\nu}\left(\frac{2u_1u_2}{u_1+u_2}\right)\leq\frac{K_{\nu}(u_1)+K_{\nu}(u_2)}{2}.\end{equation}
This inequality holds true for all $u_1,u_2>2$ and
$\nu\in\mathbb{R}.$ In each of the above inequalities equality hold
if and only if $u_1=u_2.$
\end{theorem}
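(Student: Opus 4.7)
The plan is to reduce each of the monotonicity statements (a)--(f) to an inequality for the logarithmic derivative $L(u) := uK_{\nu}'(u)/K_{\nu}(u)$, and then to close the argument with the two-sided bounds supplied by Lemma \ref{lem2}. I restrict to $\nu\geq 0$ by the symmetry $K_{-\nu}=K_\nu$, and use throughout the ODE \eqref{diffeq} in the rearranged form $u^2 K_\nu''(u) = (u^2+\nu^2) K_\nu(u) - u K_\nu'(u)$. A routine differentiation followed by a single substitution from the ODE furnishes the identities
\begin{equation*}
\bigl(L(u)\bigr)' = \frac{u^2+\nu^2-L^2}{u},\qquad
\left(\frac{K_\nu'(u)}{K_\nu^2(u)}\right)' = \frac{u^2+\nu^2-L-2L^2}{u^2\,K_\nu(u)},\qquad
\bigl(u^2 K_\nu'(u)\bigr)' = K_\nu(u)\bigl(u^2+\nu^2+L\bigr),
\end{equation*}
together with the elementary $\bigl(uK_\nu'(u)\bigr)' = (u+\nu^2/u)K_\nu(u)$. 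These four formulas are the backbone of parts (a), (c), (d) and (e).

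The easy cases are (b), (c), (d) and (f). Part (d) is immediate from the last identity. For (f) I differentiate under the integral sign in \eqref{integr} and get
\begin{equation*}
\bigl(u^2 K_\nu'(u)\bigr)' = \int_0^{\infty} u\cosh t\,(u\cosh t-2)\,e^{-u\cosh t}\cosh(\nu t)\,\doo t,
\end{equation*}
whose integrand is strictly positive for every $t\geq 0$ as soon as $u>2$. For (b) I apply the Cauchy--Schwarz inequality to the positive measure $e^{-u\cosh t}\cosh(\nu t)\,\doo t$, obtaining $(K_\nu'(u))^2 < K_\nu(u)K_\nu''(u)$ and hence $(K_\nu'/K_\nu)'>0$. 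Part (c) is then read off from the first displayed identity combined with $L^2>u^2+\nu^2$, which is the right-hand side of \eqref{eq1}.

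Parts (a) and (e) are where the bounds of Lemma \ref{lem2} are actually put to work. Writing $t:=u^2+\nu^2$, part (a) requires $2L^2+L>t$; using $L<-\sqrt{t}$, an elementary manipulation reduces this to $4\sqrt{t}-1\geq\sqrt{1+8t}$, equivalent to $t\geq 1$, which is guaranteed by $|\nu|\geq 1$. The main obstacle of the theorem is part (e). Here the third identity shows that the condition to verify is $u^2+\nu^2+L>0$, which is ensured by $\sqrt{\tfrac{\nu}{\nu-1}u^2+\nu^2}\leq u^2+\nu^2$ via the left-hand side of \eqref{eq1}; squaring and setting $x=u^2$ turns this into
\begin{equation*}
Q(x) := x^2 + \left(2\nu^2 - \tfrac{\nu}{\nu-1}\right) x + \nu^2(\nu^2-1)\geq 0 \qquad\text{for all } x>0.
\end{equation*}
For $\nu>1$ the constant term is positive, so the whole question reduces to whether the discriminant of $Q$ is nonpositive. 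A short algebraic simplification produces the identity
\begin{equation*}
\left(2\nu^2 - \tfrac{\nu}{\nu-1}\right)^2 - 4\nu^2(\nu^2-1) = \frac{\nu^2(5-4\nu)}{(\nu-1)^2},
\end{equation*}
which is nonpositive precisely when $\nu\geq 5/4$; this is the sharp threshold and is where the main arithmetic work of the theorem is concentrated.

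Finally, the chain \eqref{chain2} and its strengthening \eqref{chain20} are read off from (a)--(f) in the spirit of Corollary~2.5 of \cite{anderson}, with the equality clauses forced by strict monotonicity/convexity to $u_1=u_2$. The first inequality of \eqref{chain2} is the convexity of $1/K_\nu$ on $(0,\infty)$, which is equivalent to (a); the second and third are log-convexity and geometric concavity of $K_\nu$, i.e.\ (b) and (c); the fourth is convexity of $u\mapsto K_\nu(e^u)$, whose second derivative equals $(vK_\nu'(v))'$ at $v=e^u$ and so coincides with (d). The improved bound \eqref{chain20} demands convexity of $f(w) := K_\nu(1/w)$ on an appropriate interval; differentiating twice one finds that the sign of $f''(w)$ coincides with that of $\bigl(u^2K_\nu'(u)\bigr)'$ at $u=1/w$, whence the claim follows from (e) on the whole of $(0,\infty)$, and from (f) precisely when $u_1,u_2>2$ (so that $1/u_1$ and $1/u_2$ lie in the interval $(0,1/2)$ where the relevant function is convex).
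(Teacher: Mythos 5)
Your proposal is correct and follows essentially the same route as the paper: the same ODE-derived identities, the same use of the two bounds in Lemma \Ref{lem2} (the right-hand side for (a) and (c), the left-hand side with the identical discriminant computation $\nu^2(5-4\nu)/(\nu-1)^2$ for (e)), the same differentiation under the integral for (f), and the same reduction of \eqref{chain2} and \eqref{chain20} to (a)--(e) via generalized convexity. The only cosmetic deviations are your quadratic-root manipulation in (a) where the paper factors the relevant expression as $-L(L+1)$, and your direct Cauchy--Schwarz argument in (b) where the paper invokes complete monotonicity (itself noting the H\"older--Rogers route); both are equivalent in substance.
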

\begin{proof}[\bf Proof]
First we prove the monotonicity properties for modified Bessel
functions of the second kind.

{\bf (a)} Recall that the modified Bessel function of the second
kind $K_{\nu}$ is a particular solution of the second-order
differential equation \eqref{diffeq}, and this in turn implies that
\begin{equation}\label{eq2}K_{\nu}''(u)=(1+\nu^2/u^2)K_{\nu}(u)-(1/u)K_{\nu}'(u).\end{equation}
Consequently, by using two times the right-hand side of \eqref{eq1},
for all $u>0$ and $\nu\geq 1$ we have
\begin{align*}
\frac{\doo}{\doo u}\left[\frac{K_{\nu}'(u)}{K_{\nu}^2(u)}\right]&=
\left[\frac{1}{u^2K_{\nu}(u)}\right]\left[u^2+\nu^2-\frac{uK_{\nu}'(u)}{K_{\nu}(u)}-2\left[\frac{uK_{\nu}'(u)}{K_{\nu}(u)}\right]^2\right]\\
&<-\left[\frac{1}{u^2K_{\nu}(u)}\right]\left[\frac{uK_{\nu}'(u)}{K_{\nu}(u)}\right]\left[\frac{uK_{\nu}'(u)}{K_{\nu}(u)}+1\right]\leq0.
\end{align*}
On the other hand the function $\nu\mapsto K_{\nu}(u)$ is even, and
thus from the above result we obtain that indeed the function
$u\mapsto K_{\nu}'(u)/K_{\nu}^2(u)$ is strictly decreasing on
$(0,\infty)$ for all $|\nu|\geq 1.$

{\bf (b)} The fact that $u\mapsto K_{\nu}(u)$ is log-convex can be
verified (see for example \cite{giordano,sun}) by using the
H\"older-Rogers inequality and the familiar integral representation
\eqref{integr}, which holds for each $u>0$ and $\nu\in\mathbb{R}.$
However, in view of \eqref{integr}, for all $n\in\{0,1,2,\dots\},$
$u>0$ and $\nu\in\mathbb{R},$ we easily have
$$(-1)^nK_{\nu}^{(n)}(u)=\int_0^{\infty}(\cosh t)^ne^{-u\cosh t}\cosh(\nu t)\doo t>0,$$
i.e. the function $u\mapsto K_{\nu}(u)$ is strictly completely
monotonic. Now, since each strictly completely monotonic function is
strictly log-convex, we obtain that $u\mapsto
K_{\nu}'(u)/K_{\nu}(u)$ is strictly increasing on $(0,\infty)$ for
all $\nu\in\mathbb{R}.$

{\bf (c)} Consider the Tur\'anian
$$\Delta_{\nu}(u)=K_{\nu}^2(u)-K_{\nu-1}(u)K_{\nu+1}(u).$$ Using the
recurrence relations $$K_{\nu-1}(u)=-(\nu/u)K_{\nu}(u)-K_{\nu}'(u)$$
and $$K_{\nu+1}(u)=(\nu/u)K_{\nu}(u)-K_{\nu}'(u)$$ we have
$$\Delta_{\nu}(u)=(1+\nu^2/u^2)K_{\nu}^2(u)-\left[K_{\nu}'(u)\right]^2.$$
Combining this with \eqref{eq2}, we obtain \cite{baricz2}
$$\Delta_{\nu}(u)=\frac{1}{u}K_{\nu}^2(u)\left[\frac{uK_{\nu}'(u)}{K_{\nu}(u)}\right]'.$$
But, the function $\nu\mapsto K_{\nu}(u)$ is strictly log-convex on
$\mathbb{R}$ for each fixed $u>0$ (see \cite{bariczstudia}), which
implies that for all $\nu\in\mathbb{R}$ and $u>0$ the Tur\'an-type
inequality $\Delta_{\nu}(u)<0$ holds. This shows that the function
$u\mapsto uK_{\nu}'(u)/K_{\nu}(u)$ is strictly decreasing on
$(0,\infty)$ for all $\nu\in\mathbb{R}.$ Another proof for this part
can be obtained as follows. First observe that the function
$u\mapsto uK_{\nu}'(u)/K_{\nu}(u)$ satisfies the differential
equation $uv'(u)=u^2+\nu^2-v^2(u).$ On the other hand, it is
well-known that $K_{\nu}$ is strictly decreasing on $(0,\infty)$ for
all $\nu\in\mathbb{R}.$ Thus, by using the right-hand side of
\eqref{eq1} we conclude that $u\mapsto uK_{\nu}'(u)/K_{\nu}(u)$ is
strictly decreasing too on $(0,\infty)$ for all $\nu\in\mathbb{R}.$
It is important to add here that in fact the right-hand side of
\eqref{eq1} and the Tur\'an-type inequality $\Delta_{\nu}(u)>0$ are
equivalent (see \cite{baricz1,baricz2}).

{\bf (d)} By using again the fact that $K_{\nu}$ is a particular
solution of the modified Bessel differential equation, i.e. the
relation \eqref{eq2}, we easily have for all $u>0$ and
$\nu\in\mathbb{R}$
$$\left[uK_{\nu}'(u)\right]'=K_{\nu}'(u)+uK_{\nu}''(u)=u(1+\nu^2/u^2)K_{\nu}(u)>0.$$

{\bf (e)} Using \eqref{eq2} and the left-hand side of \eqref{eq1}, we
obtain
\begin{align*}\frac{\left[u^2K_{\nu}'(u)\right]'}{K_{\nu}(u)}&=2\frac{uK_{\nu}'(u)}{K_{\nu}(u)}+\frac{u^2K_{\nu}''(u)}{K_{\nu}(u)}=
\left[\frac{uK_{\nu}'(u)}{K_{\nu}(u)}+u^2+\nu^2\right]>u^2+\nu^2-\sqrt{u^2\nu/(\nu-1)+\nu^2}\end{align*}
for all $u>0$ and $\nu>1.$ The right-hand side of the above
inequality is positive if and only if the expression
$$Q_\nu(u)=u^4+[2\nu^2-\nu/(\nu-1)]u^2+\nu^2(\nu^2-1)$$ is positive.
It is easy to see that the discriminant of the equation
$Q_{\nu}(\sqrt{u})=0$ is $(5-4\nu)\nu^2/(\nu-1)^2$ and this is
negative if and only if $\nu\geq5/4.$ Finally, since the function
$\nu\mapsto K_{\nu}(u)$ is even, the proof is complete.

{\bf (f)} In view of \eqref{integr} we obtain that
$$u^2K_{\nu}'(u)=-u^2\int_0^{\infty}e^{-u\cosh t}(\cosh t)(\cosh(\nu t))\doo t$$
and thus
$$\left[u^2K_{\nu}'(u)\right]'=u\int_0^{\infty}(u\cosh t-2)e^{-u\cosh t}(\cosh t)(\cosh(\nu t))\doo t>0$$
for all $u>2$ and $\nu\in\mathbb{R}.$

Now, let us focus on the inequalities \eqref{chain2} and
\eqref{chain20}. As in the proof of the chain of inequalities
\eqref{chain1}, we use Corollary 2.5 from \cite{anderson}. The first
inequality in \eqref{chain2} follows from part {\bf (a)}, the second
inequality is just the strict log-convexity of $K_{\nu}$ proved in
part {\bf (b)}, while the third inequality is equivalent to the
geometrical concavity of $K_{\nu}$ proved in part {\bf (c)}. The
fourth inequality is equivalent to part {\bf (d)} of this theorem,
while the inequality \eqref{chain20} is equivalent to part {\bf
(e)}.
\end{proof}



\section{\bf Convexity of modified Bessel functions with respect to power means}

In this section we are going to complement and extend the results of
the above section. To this aim we study the convexity of modified
Bessel functions of the first and second kinds with respect to
H\"older means. For reader's convenience we recall here first some
basics.

Let $\varphi:[a,b]\subseteq\mathbb{R}\rightarrow\mathbb{R}$ be a
strictly monotonic continuous function. The function
$M_{\varphi}:[a,b]^2\rightarrow [a,b],$ defined by
$$M_{\varphi}(u_1,u_2)=\varphi^{-1}\left(\frac{\varphi(u_1)+\varphi(u_2)}{2}\right)$$
is called the quasi-arithmetic mean (or Kolmogorov mean) associated
to $\varphi,$ while the function $\varphi$ is called a generating
function (or a Kolmogorov-Nagumo function) of the quasi-arithmetic
mean $M_{\varphi}.$ A function
$f:[a,b]\subseteq\mathbb{R}\rightarrow \mathbb{R}$ is said to be
convex with respect to the mean $M_{\varphi}$ (or
$M_{\varphi}-$convex) if for all $u_1,u_2\in [a,b]$ and all
$\lambda\in[0,1]$ the inequality
$$f(M_{\varphi}^{(\lambda)}(u_1,u_2))\leq
M_{\varphi}^{(\lambda)}(f(u_1),f(u_2))$$ holds, where
$M_{\varphi}^{(\lambda)}(u_1,u_2)=\varphi^{-1}(\lambda\varphi(u_1)+(1-\lambda)\varphi(u_2))$
is the weighted version of $M_{\varphi}.$ It can be proved easily
(see for example \cite{borwein}) that $f$ is convex with respect to
$M_{\varphi}$ if and only if $\varphi\circ f\circ\varphi^{-1}$ is
convex in the usual sense on $\varphi([a,b]).$ Now, for any two
quasi-arithmetic means $M_{\varphi}$ and $M_{\psi}$ (with
Kolmogorov-Nagumo functions $\varphi$ and $\psi$ defined on
intervals $[a,b]$ and $[c,d]$), a function $f:[a,b]\to[c,d]$ is
called $(M_{\varphi},M_{\psi})-$convex if it satisfies
$$f(M_{\varphi}^{(\lambda)}(u_1,u_2))\leq M_{\psi}^{(\lambda)}(f(u_1),f(u_2))$$
for all $u_1,u_2\in[a,b]$ and $\lambda\in[0,1],$ where
$M_{\psi}^{(\lambda)}(u_1,u_2)=\psi^{-1}(\lambda\psi(u_1)+(1-\lambda)\psi(u_2)).$
If the above inequality is reversed, then we say that $f$ is
$(M_{\varphi},M_{\psi})-$concave. Due to Acz\'el \cite{acel} it is
known from a long time ago that if $\psi$ is increasing then the
function $f$ is $(M_{\varphi},M_{\psi})-$convex if and only if the
function $\psi\circ f\circ \varphi^{-1}$ is convex in the usual
sense on $\varphi([a,b]).$ This is because, if $\psi$ is increasing
and we denote with $s$ and $t$ the values $\varphi(u_1)$ and
$\varphi(u_2)$, then by definition $f$ is
$(M_{\varphi},M_{\psi})-$convex if and only if
$$\psi\left(f\left(\varphi^{-1}(\lambda s+(1-\lambda)t)\right)\right)\leq
\lambda \psi\left(f\left(\varphi^{-1}(s)\right)\right)+(1-\lambda)
\psi\left(f\left(\varphi^{-1}(t)\right)\right)$$ holds for all
$s,t\in\varphi([a,b])$ and $\lambda\in[0,1].$ See also
\cite{borwein} for more details.

Now, if $\psi$ is decreasing, then clearly the above inequality is
reversed, and this in turn implies that the function $f$ is
$(M_{\varphi},M_{\psi})-$convex if and only if the function
$\psi\circ f\circ \varphi^{-1}$ is concave in the usual sense on
$\varphi([a,b]).$ Moreover, a similar characterization of
$(M_{\varphi},M_{\psi})-$concave functions is also valid, depending
on the monotonicity of the function $\psi.$

Among the quasi-arithmetic means the H\"older means (or power means)
are of special interest. They are associated to the generating
function $\varphi_p:(0,\infty)\rightarrow\mathbb{R},$ defined by
$$\varphi_p(u)=\left\{\begin{array}{ll}u^p,& \mbox{if}\ p\neq 0\\
\ln u,& \mbox{if}\ p=0,\end{array}\right.$$ and have the following
form
$$M_{\varphi_p}^{(\lambda)}(u_1,u_2)=\left\{\begin{array}{ll}
{[(1-\lambda)u_1^p+\lambda u_2^p]^{1/p}},& \mbox{if}\ p\neq 0\\
u_1^{\lambda}u_2^{1-\lambda},& \mbox{if}\ p=0.\end{array}\right.$$

Now, let $p$ and $q$ be two arbitrary real numbers. Using the above
definitions of generalized convexities we say that a function
$f:[a,b]\subseteq(0,\infty)\to(0,\infty)$ is
$(M_{\varphi_p},M_{\varphi_q})-$convex, or simply $(p,q)-$convex, if
the inequality
\begin{equation}
f(M_{\varphi_p}^{(\lambda)}(u_1,u_2))\leq
M_{\varphi_q}^{(\lambda)}(f(u_1),f(u_2))
\end{equation}
is valid for all $p,q\in\mathbb{R},$ $u_1,u_2\in[a,b]$ and
$\lambda\in[0,1].$ If the above inequality is reversed, then we say
that the function $f$ is $(M_{\varphi_p},M_{\varphi_q})-$concave, or
simply $(p,q)-$concave. Observe that the $(1,1)-$convexity is the
usual convexity, the $(1,0)-$convexity is exactly the log-convexity,
while the $(0,0)-$convexity corresponds to the case of the
geometrical convexity. We note that motivated by the works
\cite{anderson,bariczj1} and \cite{bariczj2}, recently Baricz
\cite{bariczjipam} considered the $(p,p)-$convexity of the
zero-balanced Gaussian hypergeometric functions and general power
series. The $(p,q)-$convexity of zero-balanced Gaussian
hypergeometric functions was considered recently by Zhang et al.
\cite{zhang}.

The following result gives a characterization of differentiable
$(p,q)-$convex functions and will be applied in the sequel in the
study of the convexity of modified Bessel functions of the first and
second kinds with respect to power means. For a proof see
\cite{bariczgeo}.

\begin{Lemma}\label{lem3}
Let $p,q\in\mathbb{R}$ and let
$f:[a,b]\subseteq(0,\infty)\to(0,\infty)$ be a differentiable
function. The function $f$ is (strictly) $(p,q)-$convex
($(p,q)-$concave) if and only if $u\mapsto u^{1-p}f'(u)[f(u)]^{q-1}$
is (strictly) increasing (decreasing) on $[a,b].$
\end{Lemma}

The next result completes and extends parts {\bf (a)}, {\bf (b)} and
{\bf (d)} of Theorem \ref{th1}. Notice that if we choose in part
{\bf (b)} of Theorem \ref{th1new} the values $p=0$ and $q=-1,$ then
we reobtain part {\bf (a)} of Theorem \ref{th1}. Similarly, choosing
$p=q=0$ in part {\bf (a)} of Theorem \ref{th1new} we obtain the
strict geometrical convexity stated in part {\bf (b)} of Theorem
\ref{th1}. Finally, by taking $p=q=-1$ in part {\bf (b)} of Theorem
\ref{th1new} we obtain the monotonicity result stated in part {\bf
(d)} of Theorem \ref{th1}.

\begin{theorem}\label{th1new}
Let $p,q\in\mathbb{R}$ and let $\nu>-1.$ Then the following
assertions are true:
\begin{enumerate}
\item[\bf (a)] if $p\leq0$ and $q\geq 0,$ then $I_{\nu}$ is strictly
$(p,q)-$convex on $(0,\infty);$
\item[\bf (b)] if $p\leq0$ and $q<0,$ then $I_{\nu}$ is strictly
$(p,q)-$concave on $(0,\infty)$ provided if $\nu\geq -1/q$ and
$$4q(q-1)\nu^3-(p^2-4(q-1))\nu^2-2p^2\nu-p^2\geq0;$$
\item[\bf (c)] if $p\geq0$ and $q\leq-1,$ then $I_{\nu}$ is strictly
$(p,q)-$concave on $(0,\infty)$ provided if $\nu\geq 1;$
\item[\bf (d)] if $p\geq 0$ and $q>0,$ then $I_{\nu}$ is strictly
$(p,q)-$convex on $(0,\infty)$ provided if $\nu\geq p/q;$
\item[\bf (e)] if $p\leq1$ and $q\geq 1,$ then $I_{\nu}$ is strictly
$(p,q)-$convex on $(0,\infty).$
\end{enumerate}

\end{theorem}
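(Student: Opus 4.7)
The plan is to apply Lemma \ref{lem3}, which converts strict $(p,q)$-convexity (resp.\ concavity) of $I_\nu$ on $(0,\infty)$ into strict monotonicity of the auxiliary function
\[
g_{p,q}(u) \;=\; u^{1-p}\, I_\nu'(u)\,[I_\nu(u)]^{q-1}.
\]
Logarithmic differentiation, after substituting $I_\nu''/I_\nu'$ from the modified Bessel ODE \eqref{diffeq} in the form $uI_\nu''/I_\nu' = (u^2+\nu^2)/R - 1$, yields the pivotal identity
\[
u\,\frac{g_{p,q}'(u)}{g_{p,q}(u)} \;=\; -p \,+\, \frac{u^2+\nu^2}{R(u)} \,+\, (q-1)\,R(u), \qquad R(u):=\frac{uI_\nu'(u)}{I_\nu(u)}.
\]
Each part then reduces to pinning down the sign of the right-hand side, for which we use the two-sided bound
\[
\sqrt{\tfrac{\nu}{\nu+1}u^2+\nu^2} \;<\; R(u) \;<\; \sqrt{u^2+\nu^2}
\]
from Lemma \ref{lem1}.

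The convex regimes (a), (d), and (e) need only the upper bound on $R$. In (a), with $p\le 0$ and $q\ge 0$, a conceptually clean argument is to rewrite $g_{p,q}(u)=R(u)\cdot u^{-p}\cdot [I_\nu(u)]^q$ as a product of three positive strictly increasing functions, using Theorem \ref{th1}(b) for the factor $R$. In (d), with $p\ge 0$, $q>0$, $\nu\ge p/q$, the identity is positive on $(0,\infty)$ because its $u\to 0^+$ limit (where $R\to\nu$) equals $q\nu-p$, which is the worst case and matches the hypothesis exactly. In (e), combining $-p\ge -1$, $(u^2+\nu^2)/R > \sqrt{u^2+\nu^2}$, and $(q-1)R\ge 0$ in the identity delivers the required positivity.

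The analytic heart lies in (b) and (c). Multiplying the identity by $R(u)>0$ turns strict $(p,q)$-concavity into the quadratic inequality $(q-1)R^2 - pR + (u^2+\nu^2)<0$ in $R$: since $q<1$ the parabola opens downward and has a unique positive root $R_-$, and one needs $R(u)>R_-$ for every $u>0$. Substituting the \emph{lower} Lemma \ref{lem1} bound for $R(u)$, isolating, and squaring twice with close attention to signs produces a polynomial condition in $x=u^2$. Demanding that it hold for all $u>0$ first forces $1+q\nu\le 0$, i.e.\ $\nu\ge -1/q$, so that the dominant coefficient on the isolated side carries the correct sign; the surviving discriminant condition then simplifies to precisely the cubic
\[
4q(q-1)\nu^3 - (p^2-4(q-1))\nu^2 - 2p^2\nu - p^2 \;\ge\; 0
\]
stated in (b). Case (c) is handled by the same machinery, with the threshold $\nu\ge 1$ emerging from the analogous discriminant analysis.

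The main obstacle will be the sign bookkeeping during the two squaring steps in (b). Because $p$, $q-1$, and the two sides of the isolated square-root inequality change sign as $(p,q,\nu)$ roam, each squaring requires its own verification that the quantity being squared has a definite sign; it is precisely this bookkeeping that forces the twin conditions $\nu\ge -1/q$ and the cubic inequality above to appear together, rather than either one alone being sufficient.
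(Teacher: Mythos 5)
Your framework is exactly the paper's: Lemma \ref{lem3} plus the modified Bessel equation reduce each part to the sign of $u^2+\nu^2-pR(u)-(1-q)R(u)^2$, where $R(u)=uI_{\nu}'(u)/I_{\nu}(u)$ (your logarithmic-derivative identity is this expression divided by $R$), and the two-sided bound \eqref{eq4} together with the product decompositions you use in (a) and (d) are precisely the paper's arguments. For (c) the paper is much lighter: it is a one-line consequence of Theorem \ref{th1}(a), writing $u^{1-p}I_{\nu}'I_{\nu}^{q-1}$ as a product of the decreasing functions $uI_{\nu}'/I_{\nu}^2$ and $u^{-p}I_{\nu}^{q+1}$; no discriminant is needed there (indeed $q\leq-1$ and $\nu\geq1$ give $1+q\nu\leq0$, which already makes the relevant expression termwise nonpositive).

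Two steps in your sketch do not deliver what you claim. In (b), substituting the lower Lemma \ref{lem1} bound $L(u)$ for $R$ in \emph{both} the linear and the quadratic terms is logically admissible (your downward-parabola argument), but the squared inequality it produces is $\bigl(\tfrac{1+q\nu}{\nu+1}u^2+q\nu^2\bigr)^2\geq p^2\bigl(\tfrac{\nu}{\nu+1}u^2+\nu^2\bigr)$, whose discriminant condition is \emph{not} the stated cubic. The paper bounds the linear term $-pR=|p|R$ by the \emph{upper} bound $|p|\sqrt{u^2+\nu^2}$ (and only the quadratic term by the lower bound), obtaining $(\cdots)^2\geq p^2(u^2+\nu^2)$, which is what yields the cubic exactly. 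Your weaker condition happens to be implied by the paper's, so the conclusion still follows, but only after an a fortiori comparison you have not supplied. In (e), the chain $-p\geq-1$, $(u^2+\nu^2)/R>\sqrt{u^2+\nu^2}$, $(q-1)R\geq0$ only gives $ug'(u)/g(u)>\sqrt{u^2+\nu^2}-1$, which is negative when $u^2+\nu^2<1$; so your argument for (e) does not close for $\nu<1$ and small $u$ (the paper instead invokes monotonicity and convexity of $I_{\nu}$ and writes the auxiliary function as a product of three increasing factors). Note also that your uses of $R>0$ (in (a) and (e)) and the division by $I_{\nu}'$ in the logarithmic derivative are problematic for $\nu\in(-1,0)$, where $I_{\nu}'$ changes sign; the paper's direct computation of the derivative avoids that division.
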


\begin{proof}[\bf Proof] For convenience first we introduce the
following notation
\begin{align*}
\lambda_{p,q,\nu}(u)=\frac{\doo}{\doo
u}\left[\frac{u^{1-p}I_{\nu}'(u)}{I_{\nu}^{1-q}(u)}\right]=
\frac{I_{\nu}^q(u)}{u^{p+1}}\left[u^2+\nu^2-p\left[\frac{uI_{\nu}'(u)}{I_{\nu}(u)}\right]-(1-q)\left[\frac{uI_{\nu}'(u)}{I_{\nu}(u)}\right]^2\right].
\end{align*}
We note that in view of Lemma \ref{lem3} the $(p,q)-$convexity
($(p,q)-$concavity) of $I_{\nu}$ depends only on the sign of the
expression $\lambda_{p,q,\nu}(u).$

{\bf (a)} This follows easily from the fact that if $\nu>-1,$
$p\leq0$ and $q\geq 0,$ then $\lambda_{p,q,\nu}(u)>0$ for all $u>0.$
More precisely, from the right-hand side of \eqref{eq4} we have
$$\lambda_{p,q,\nu}(u)>\frac{I_{\nu}^q(u)}{u^{p+1}}
\left[-p\left[\frac{uI_{\nu}'(u)}{I_{\nu}(u)}\right]+q\left[\frac{uI_{\nu}'(u)}{I_{\nu}(u)}\right]^2\right]\geq0$$
for all $\nu>-1,$ $p\leq0,$ $q\geq 0$ and $u>0.$ It should be
mentioned here that this part follows actually from part {\bf (b)}
of Theorem \ref{th1}. Namely, the function $u\mapsto
u^{1-p}I_{\nu}'(u)\left[I_{\nu}(u)\right]^{q-1}$ is strictly
increasing on $(0,\infty)$ for all $p\leq0,$ $q\geq0$ and $\nu>-1$
as a product of the strictly increasing functions $u\mapsto
uI_{\nu}'(u)/I_{\nu}(u)$ and $u\mapsto u^{-p}I_{\nu}^q(u).$ Now,
since for $p=q=0$ this part reduces to part {\bf (b)} of Theorem
\ref{th1}, the above remark reveals that in fact part {\bf (b)} of
Theorem \ref{th1} and part {\bf (a)} of Theorem \ref{th1new} are
equivalent.

{\bf (b)} First assume that $p<0$ and $q<0.$ Then by using
\eqref{eq4} we obtain that
$$\lambda_{p,q,\nu}(u)<\frac{I_{\nu}^q(u)}{u^{p+1}}\left[u^2+\nu^2-p\sqrt{u^2+\nu^2}-(1-q)\left(\frac{\nu}{\nu+1}u^2+\nu^2\right)\right]$$
and this is nonpositive if
$$p^2(u^2+\nu^2)\leq\left(q\nu^2+\frac{q\nu+1}{\nu+1}u^2\right)^2,\ \ \ \mbox{i.e.}\ \ 0\leq Q_{\nu}(u^2),$$
where $Q_{\nu}(u)=au^2+bu+c$ with $\nu\geq -1/q,$
$$a=\left(\frac{q\nu+1}{\nu+1}\right)^2,\ b=2q\nu^2\frac{q\nu+1}{\nu+1}-p^2,\ c=\nu^2(q^2\nu^2-p^2).$$
This gives a necessary condition to be $b^2-4ac\leq0.$ A computation
shows that the condition $b^2-4ac\leq0$ is equivalent to the
inequality
$$4q(q-1)\nu^3-(p^2-4(q-1))\nu^2-2p^2\nu-p^2\geq0.$$

Now, assume that $p=0$ and $q<0.$ Then from the left-hand side of
\eqref{eq4} we have
$$\lambda_{0,q,\nu}(u)=\frac{I_{\nu}^q(u)}{u}\left[u^2+\nu^2-(1-q)\left[\frac{uI_{\nu}'(u)}{I_{\nu}(u)}\right]^2\right]<
\frac{I_{\nu}^q(u)}{u}\left[\left(\frac{q\nu+1}{\nu+1}\right)u^2+q\nu^2\right]<0$$
for all $\nu\geq -1/q,$ $q<0$ and $u>0,$ as we requested.

{\bf (c)} This follows directly from part {\bf (a)} of Theorem
\ref{th1}. More precisely, it is easy to see that the function
$u\mapsto u^{1-p}I_{\nu}'(u)I_{\nu}^{q-1}(u)$ is strictly decreasing
on $(0,\infty)$ for all $\nu\geq 1$ as a product of the strictly
decreasing function $u\mapsto uI_{\nu}'(u)/I_{\nu}^2(u)$ and the
decreasing function $u\mapsto u^{-p}I_{\nu}^{q+1}(u).$ Since part
{\bf (c)} of Theorem \ref{th1new} reduces to part {\bf (a)} of
Theorem \ref{th1} when $p=0$ and $q=-1$, the above proof reveals
that in fact part {\bf (c)} of Theorem \ref{th1new} is equivalent to
part {\bf (a)} of Theorem \ref{th1}.

{\bf (d)} Recall that part {\bf (b)} of Theorem \ref{th1} states
that $I_{\nu}$ is strictly geometrically convex on $(0,\infty)$ for
all $\nu>-1,$ i.e. the function $u\mapsto uI_{\nu}'(u)/I_{\nu}(u)$
is strictly increasing on $(0,\infty)$ for all $\nu>-1.$ To prove
that $I_{\nu}$ is strictly $(p,q)-$convex on $(0,\infty)$ for all
$p\geq 0,$ $q>0$ and $\nu\geq p/q$ in what follows we show that the
function $u\mapsto u^{1-p}I_{\nu}'(u)I_{\nu}^{q-1}(u)$ is strictly
increasing as a product of the strictly increasing functions
$u\mapsto uI_{\nu}'(u)/I_{\nu}(u)$ and $u\mapsto
u^{-p}I_{\nu}^q(u).$ On the other hand, observe that since $u\mapsto
uI_{\nu}'(u)/I_{\nu}(u)$ is strictly increasing on $(0,\infty)$, we
obtain that
$$uI_{\nu}'(u)/I_{\nu}(u)>\nu$$ for all $\nu>-1$ and $u>0$
(actually for $\nu>0$ this inequality follows directly from the left-hand side of \eqref{eq4}). Here we
used that if $u$ tends to zero then $uI_{\nu}'(u)/I_{\nu}(u)$ tends
to $\nu,$ which can be verified from \eqref{eq4} or from
$$\frac{uI_{\nu}'(u)}{I_{\nu}(u)}=\nu+2\sum_{n\geq
1}\frac{u^2}{u^2+j_{\nu,n}^2}.$$ The above inequality implies that
$$\frac{\doo}{\doo u}\left[\frac{I_{\nu}^q(u)}{u^p}\right]=\frac{I_{\nu}^q(u)}{u^{p+1}}\left[-p+q\frac{uI_{\nu}'(u)}{I_{\nu}(u)}\right]
>\frac{I_{\nu}^q(u)}{u^{p+1}}(-p+q\nu)\geq0,$$
and with this the proof of this part is complete.

{\bf (e)} This follows from the fact that $I_{\nu}$ is strictly
increasing and convex on $(0,\infty)$ for all $\nu>-1.$ Namely, the
function $u\mapsto u^{1-p}I_{\nu}'(u)I_{\nu}^{q-1}(u)$ is strictly
increasing as a product of the strictly increasing function
$u\mapsto I_{\nu}'(u)$ and the increasing functions $u\mapsto
u^{1-p}$ and $u\mapsto I_{\nu}^{q-1}(u).$
\end{proof}

Now, we are going to present the analogous result of Theorem
\ref{th1new} for modified Bessel functions of the second kind. We
note that part {\bf (c)} of Theorem \ref{th2new} (when $p=1$ and
$q=-1$) reduces to part {\bf (a)} of Theorem \ref{th2}, part {\bf
(e)} of Theorem \ref{th2new} (when $p=1$ and $q=0$) becomes part
{\bf (b)} of Theorem \ref{th2}, part {\bf (b)} of Theorem
\ref{th2new} (when $p=q=0$) reduces to part {\bf (c)} of Theorem
\ref{th2}, and part {\bf (d)} of Theorem \ref{th2new} (when $p=0$
and $q=1$) becomes part {\bf (d)} of Theorem \ref{th2}. Finally,
observe that if we choose $p=-1$ and $q=1$ in part {\bf (a)} of
Theorem \ref{th2new}, then we obtain part {\bf (e)} of Theorem
\ref{th2}.

\begin{theorem}\label{th2new}
Let $p,q\in\mathbb{R}$ and let $\nu\in\mathbb{R}.$ Then the
following assertions are true:
\begin{enumerate}
\item[\bf (a)] if $p\leq0$ and $q\geq1,$ then $K_{\nu}$ is strictly
$(p,q)-$convex on $(0,\infty)$ provided if $\nu>1$ and
$$4(1-q)p^2\nu^2+4(q-2)p^2\nu+p^2(p^2+4)\leq0;$$
\item[\bf (b)] if $p\leq0$ and $q\leq 0,$ then $K_{\nu}$ is strictly
$(p,q)-$concave on $(0,\infty);$
\item[\bf (c)] if $p\geq0$ and $q<0,$ then $K_{\nu}$ is strictly
$(p,q)-$concave on $(0,\infty)$ provided if $|\nu|\geq-p/q;$
\item[\bf (d)] if $p\geq0$ and $q\geq 1,$ then $K_{\nu}$ is strictly
$(p,q)-$convex on $(0,\infty);$
\item[\bf (e)] if $p\geq1$ and $q\geq 0,$ then $K_{\nu}$ is strictly
$(p,q)-$convex on $(0,\infty).$
\end{enumerate}

\end{theorem}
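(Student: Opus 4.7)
The plan is to mirror the strategy of Theorem \ref{th1new}. By Lemma \ref{lem3}, the $(p,q)$-convexity or concavity of $K_\nu$ is determined by the sign of
\[
\mu_{p,q,\nu}(u)=\frac{\doo}{\doo u}\left[\frac{u^{1-p}K_\nu'(u)}{K_\nu^{1-q}(u)}\right]=\frac{K_\nu^q(u)}{u^{p+1}}\left[u^2+\nu^2-pL(u)+(q-1)L(u)^2\right],
\]
where $L(u)=uK_\nu'(u)/K_\nu(u)<0$ and the second equality uses the differential equation \eqref{eq2}. Since $K_\nu^q/u^{p+1}>0$, everything reduces to the sign of the bracketed expression $\Lambda(u)$. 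Our only tools are the inequalities \eqref{eq1}: the right-hand side, valid for all $\nu\in\mathbb{R}$, yields $L(u)^2>u^2+\nu^2$; the left-hand side, valid for $|\nu|>1$ (via evenness of $K_\nu$ in $\nu$), yields $L(u)^2<\frac{\nu}{\nu-1}u^2+\nu^2$.

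Parts (b) and (d) are immediate. In (b), for $p\leq 0$ and $q\leq 0$, both $-pL$ and $(q-1)L^2$ are nonpositive; bounding them via the right-hand side of \eqref{eq1} alone (so no restriction on $\nu$ is needed) gives $\Lambda(u)<q(u^2+\nu^2)+p\sqrt{u^2+\nu^2}\leq 0$. In (d), for $p\geq 0$ and $q\geq 1$, every summand of $\Lambda$ is already nonnegative (in particular $-pL\geq 0$ since $p\geq 0$ and $L<0$), so $\Lambda>0$ trivially. Part (e) is handled by a factorisation: write
\[
u^{1-p}K_\nu'(u)K_\nu^{q-1}(u)=\bigl[u^{1-p}K_\nu^q(u)\bigr]\cdot\bigl[K_\nu'(u)/K_\nu(u)\bigr].
\]
For $p\geq 1$ and $q\geq 0$ the first bracket is a product of positive non-increasing functions, hence itself positive and non-increasing, while the second is negative and strictly increasing by part (b) of Theorem \ref{th2}. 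The derivative of their product is therefore strictly positive, which is what Lemma \ref{lem3} requires.

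Parts (a) and (c) are the substantive cases. For (c), view $\Lambda$ as a downward-opening quadratic in $L$ (since $q-1<0$); its two real roots straddle the origin, and a direct calculation shows that the hypothesis $|\nu|\geq -p/q$ is equivalent to the negative root $L_-$ satisfying $L_-\geq -\sqrt{u^2+\nu^2}$ for every $u>0$. Since the admissible range $L(u)<-\sqrt{u^2+\nu^2}$ then lies strictly below both roots, the downward parabola takes negative values there, giving $\Lambda(u)<0$. For (a), lower-bound $\Lambda$ by using $L^2>u^2+\nu^2$ for the $(q-1)L^2$ term and $-L<\sqrt{\frac{\nu}{\nu-1}u^2+\nu^2}$ (requiring $\nu>1$) for the $-pL$ term, obtaining
\[
\Lambda(u)>q(u^2+\nu^2)+p\sqrt{\frac{\nu}{\nu-1}u^2+\nu^2}.
\]
Requiring the latter to be $\geq 0$ for every $u>0$, squaring (both sides have compatible signs since $q\geq 1$), and analysing the resulting quadratic in $x=u^2$ via its discriminant and constant term yields, after simplification, precisely the cubic $4(1-q)p^2\nu^2+4(q-2)p^2\nu+p^2(p^2+4)\leq 0$ of the theorem.

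The main obstacle is the book-keeping in part (a): one must pick the two sharp estimates from \eqref{eq1} at the right places and grind through a somewhat opaque discriminant computation. This parallels part (b) of Theorem \ref{th1new}, where the analogous cubic $4q(q-1)\nu^3-(p^2-4(q-1))\nu^2-2p^2\nu-p^2\geq 0$ arose, with $\nu+1$ and $\nu/(\nu+1)$ replaced throughout by $\nu-1$ and $\nu/(\nu-1)$, reflecting the transition from the first- to the second-kind modified Bessel function.
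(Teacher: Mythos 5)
Your overall strategy---reducing everything to the sign of $\mu_{p,q,\nu}$ via Lemma \ref{lem3} and the differential equation \eqref{eq2}, and then deploying the two sides of \eqref{eq1}---is exactly the paper's. Writing $L(u)=uK_{\nu}'(u)/K_{\nu}(u)$, your parts (b), (d) and (e) coincide with the paper's arguments (the paper proves (d) and (e) by the same product decompositions and explicitly notes the trivial sign argument you give for (d) as an alternative). Your part (c) is a correct geometric repackaging: the paper instead bounds $-pL+qL^2=-L(p-qL)$ directly using $L<-\nu$ (its inequality \eqref{eq9}) together with the right-hand side of \eqref{eq1}, but both arguments rest on the same two facts and the same hypothesis $|\nu|\geq-p/q$.

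Part (a), however, contains a genuine error in the final step. With your choice of estimates ($L^2>u^2+\nu^2$ for the quadratic term) the inequality to be verified is $q(x+\nu^2)\geq-p\sqrt{\tfrac{\nu}{\nu-1}x+\nu^2}$ with $x=u^2$, and squaring gives the quadratic $q^2x^2+\bigl(2q^2\nu^2-\tfrac{\nu}{\nu-1}p^2\bigr)x+\nu^2(q^2\nu^2-p^2)$, whose discriminant is $\tfrac{\nu^2p^2}{(\nu-1)^2}\bigl(p^2-4q^2(\nu-1)\bigr)$. The resulting condition $p^2\leq4q^2(\nu-1)$ is \emph{not} the stated cubic: for $p\neq0$ the cubic is equivalent to $p^2\leq4(\nu-1)\bigl((q-1)\nu+1\bigr)$, and $(q-1)\nu+1$ and $q^2$ are incomparable for $q>1$ (e.g.\ $q=2$, $\nu=10$, $p^2=300$ satisfies the theorem's hypothesis but fails your discriminant test). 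So the computation you describe does not ``yield precisely the cubic of the theorem,'' and as written your proof does not cover the stated hypothesis. The paper gets the stated cubic by using the \emph{cruder} bound $L^2>\nu^2$ (from $L<-\nu$, inequality \eqref{eq9}) for the $(q-1)L^2$ term, which yields the minorant $u^2+q\nu^2+p\sqrt{\tfrac{\nu}{\nu-1}u^2+\nu^2}$ and, after squaring, a monic quadratic in $u^2$ whose discriminant is $\tfrac{\nu^2}{(\nu-1)^2}$ times the stated cubic. (Your sharper minorant is in fact nonnegative under the theorem's hypothesis, since $q(x+\nu^2)\geq x+q\nu^2>0$ for $q\geq1$, so it dominates the paper's; but that comparison is precisely the missing reduction, and it renders the sharper bound pointless.) This is the same pitfall avoided in part (b) of Theorem \ref{th1new}, where the analogous mixed use of the two bounds is what produces the cubic there.
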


\begin{proof}[\bf Proof] For convenience first we introduce the
following notation
\begin{align*}
\mu_{p,q,\nu}(u)=\frac{\doo}{\doo
u}\left[\frac{u^{1-p}K_{\nu}'(u)}{K_{\nu}^{1-q}(u)}\right]=
\frac{K_{\nu}^q(u)}{u^{p+1}}\left[u^2+\nu^2-p\left[\frac{uK_{\nu}'(u)}{K_{\nu}(u)}\right]-(1-q)\left[\frac{uK_{\nu}'(u)}{K_{\nu}(u)}\right]^2\right].
\end{align*}
Observe that in view of Lemma \ref{lem3} the $(p,q)-$convexity
($(p,q)-$concavity) of $K_{\nu}$ depends only on the sign of the
expression $\mu_{p,q,\nu}(u).$

{\bf (a)} Notice that for all $\nu\in\mathbb{R}$ fixed when $u$
tends to zero $uK_{\nu}'(u)/K_{\nu}(u)$ tends to $-\nu.$ This can be
verified for example from the integral representation
\eqref{integr}. On the other hand, in view of part {\bf (c)} of
Theorem \ref{th2} the function $u\mapsto uK_{\nu}'(u)/K_{\nu}(u)$ is
strictly decreasing on $(0,\infty)$ for all $\nu\in\mathbb{R},$ and
this in turn implies that for all $\nu\in\mathbb{R}$ and $u>0$ the
inequality
\begin{equation}\label{eq9}uK_{\nu}'(u)/K_{\nu}(u)<-\nu\end{equation} holds. We note that actually this
follows also from the right-hand side of \eqref{eq1}. Now, by using
\eqref{eq9} and the left-hand side of \eqref{eq1} we obtain that
$$\mu_{p,q,\nu}(u)>\frac{K_{\nu}^q(u)}{u^{p+1}}\left[u^2+\nu^2+p\sqrt{\frac{\nu}{\nu-1}u^2+\nu^2}+(q-1)\nu^2\right]$$
and the right hand side of the last inequality is nonnegative if and
only if
$$Q_{\nu}(u)=u^4+\left(2q\nu^2-\frac{\nu}{\nu-1}p^2\right)u^2+\nu^2(q^2\nu^2-p^2)\geq0.$$
Now, under assumptions the discriminant of the quadratic equation
$Q_{\nu}(\sqrt{u})=0,$ i.e.
$$\frac{\nu^2}{(\nu-1)^2}\left[4(1-q)p^2\nu^2+4(q-2)p^2\nu+p^2(p^2+4)\right]$$
is negative and with this the proof of this part is complete.

{\bf (b)} This follows from the fact that if $\nu\in\mathbb{R}$ and
$p,q\leq 0,$ then $\mu_{p,q,\nu}(u)<0$ for all $u>0.$ Namely, from
the right-hand side of \eqref{eq1} we have
$$\mu_{p,q,\nu}(u)<\frac{K_{\nu}^q(u)}{u^{p+1}}
\left[-p\left[\frac{uK_{\nu}'(u)}{K_{\nu}(u)}\right]+q\left[\frac{uK_{\nu}'(u)}{K_{\nu}(u)}\right]^2\right]\leq0$$
for all $\nu\in\mathbb{R},$ $p,q\leq 0$ and $u>0.$ Here we used that
$K_{\nu}$ is strictly decreasing on $(0,\infty)$ for all
$\nu\in\mathbb{R}.$ We note here that this part follows actually
from part {\bf (c)} of Theorem \ref{th2}. Namely, the function
$u\mapsto u^{1-p}K_{\nu}'(u)\left[K_{\nu}(u)\right]^{q-1}$ is
strictly decreasing on $(0,\infty)$ for all $p,q\leq0$ and
$\nu\in\mathbb{R}$ as a product of the strictly decreasing and
negative function $u\mapsto uK_{\nu}'(u)/K_{\nu}(u)$ and the
strictly increasing and positive function $u\mapsto
u^{-p}K_{\nu}^q(u).$ Now, since for $p=q=0$ this part reduces to
part {\bf (c)} of Theorem \ref{th2}, the above remark shows that in
fact part {\bf (c)} of Theorem \ref{th2} is equivalent to part {\bf
(b)} of Theorem \ref{th2new}.

{\bf (c)} By using \eqref{eq9} and the right-hand side of
\eqref{eq1} we have for all $u>0,$ $p\geq0,$ $q<0$ and $\nu\geq
-p/q$
\begin{align*}
\mu_{p,q,\nu}(u)&<\frac{K_{\nu}^q(u)}{u^{p+1}}
\left[-p\left[\frac{uK_{\nu}'(u)}{K_{\nu}(u)}\right]+q\left[\frac{uK_{\nu}'(u)}{K_{\nu}(u)}\right]^2\right]\\
&=-\frac{K_{\nu}^q(u)}{u^{p+1}}\left[\frac{uK_{\nu}'(u)}{K_{\nu}(u)}\right]\left[p-q\left[\frac{uK_{\nu}'(u)}{K_{\nu}(u)}\right]\right]\\
&\leq-(p+q\nu)\frac{K_{\nu}^q(u)}{u^{p+1}}\left[\frac{uK_{\nu}'(u)}{K_{\nu}(u)}\right]
\leq0.
\end{align*}

{\bf (d)} Since $p\geq0$ and $q\geq 1,$ the function $u\mapsto
u^{-p}K_{\nu}^{q-1}(u)$ is decreasing on $(0,\infty)$ for all
$\nu\in\mathbb{R}.$ Now, by using part {\bf (d)} of Theorem
\ref{th2} we conclude that $u\mapsto
u^{1-p}K_{\nu}'(u)\left[K_{\nu}(u)\right]^{q-1}$ is strictly
increasing as a product of the strictly increasing and negative
function $u\mapsto uK_{\nu}'(u)$ and the decreasing and positive
function $u\mapsto u^{-p}K_{\nu}^{q-1}(u).$ Observe that since for
$p=0$ and $q=1$ this part reduces to part {\bf (d)} of Theorem
\ref{th2}, in fact they are equivalent. Finally, we note that the
proof of this part can be obtained also simply from the fact that
under assumptions $\mu_{p,q,\nu}(u)>0.$

{\bf (e)} The proof of this part is very similar to the proof of
part {\bf (d)} above. Under assumptions the function $u\mapsto
u^{1-p}K_{\nu}^q(u)$ is decreasing. Consequently, by using part {\bf
(b)} of Theorem \ref{th2}, the function $u\mapsto
u^{1-p}K_{\nu}'(u)\left[K_{\nu}(u)\right]^{q-1}$ is strictly
increasing as a product of the strictly increasing and negative
function $u\mapsto K_{\nu}'(u)/K_{\nu}(u)$ and the decreasing and
positive function $u\mapsto u^{1-p}K_{\nu}^{q}(u).$ Observe that
since for $p=1$ and $q=0$ this part reduces to part {\bf (b)} of
Theorem \ref{th2}, in fact they are equivalent.
\end{proof}



\section{\bf Application to the log-concavity of the gamma-gamma distribution}

The probability density function
$f_{a,b,\alpha}:(0,\infty)\to(0,\infty)$ of the three parameter
gamma-gamma random variable is defined by (see \cite{karag})
$$f_{a,b,\alpha}(u)=\frac{2(ab)^{\frac{a+b}{2}}u^{\frac{a+b}{2}-1}}{\Gamma(a)\Gamma(b)\alpha^{{\frac{a+b}{2}}}}
K_{a-b}\left(2\sqrt{\frac{ab}{\alpha}u}\right),$$ where $a,b>0$ are
the distribution shaping parameters, $K_{\nu}$ stands for the
modified Bessel function of the second kind, and $\alpha>0$ is the
mean of the gamma-gamma random variable. The gamma-gamma
distribution is produced from the product of two independent gamma
random variables and has been widely used in a variety of
applications, for example in modeling various types of land and sea
radar clutters, in modeling the effects of the combined fading and
shadowing phenomena, encountered in the mobile communications
channels. Of particular interest is the application of the
gamma-gamma distribution in optical wireless systems, where
transmission of optical signals through the atmosphere is involved.
For more details see \cite{karag,karag2}.

Now, consider the functions
$\widetilde{f}_{a,b,\alpha}:(0,\infty)\to(0,\infty)$ and
$F_{a,b,\alpha}:(0,\infty)\to(0,1)$ defined by
$$\widetilde{f}_{a,b,\alpha}(u)=f_{a,b,\alpha}\left(\frac{\alpha u^2}{4ab}\right)=
\frac{2^{3-(a+b)}(ab)u^{a+b-2}}{\alpha\Gamma(a)\Gamma(b)}K_{a-b}(u)$$
and
$$F_{a,b,\alpha}(u)=\int_0^uf_{a,b,\alpha}(t)\doo t=\frac{1}{\Gamma(a)\Gamma(b)}\cdot G_{1,3}^{2,1}
\left[\left.\frac{ab}{\alpha}u\right|\begin{array}{c}1\\
a,b,0\end{array}\right],$$ where $G_{1,3}^{1,2}$ is a Meijer
$G-$function \cite[eq. 9.301]{gradshteyn}. Here
$\widetilde{f}_{a,b,\alpha}$ is just a transformation of the
probability density function ${f}_{a,b,\alpha},$ while
$F_{a,b,\alpha}$ is the cumulative distribution function of the
gamma-gamma distribution.

In probability theory usually the cumulative distribution functions
does not have closed-form, and thus sometimes it is quite difficult
to study their properties directly. In statistics, economics and
industrial engineering frequently appears some problems which are
related to the study of log-concavity (log-convexity) of some
univariate distributions. An interesting unified exposition of
related results on the log-concavity and log-convexity of many
distributions, including applications in economics, were
communicated by Bagnoli and Bergstrom \cite{bagnoli}. Some of their
main results were reconsidered by Andr\'as and Baricz \cite{andras}
by using the monotone form of l'Hospital's rule. Moreover, by using
the idea from \cite{andras}, recently, Baricz \cite{bariczgeo}
showed, among others, that if a probability density function is
geometrically concave then the corresponding cumulative distribution
function will be also geometrically concave. In this section we use
this result to prove that the cumulative distribution function
$F_{a,b,\alpha}$ is strictly log-concave on $(0,\infty)$ for all
$a,b,\alpha>0.$ This result may be useful in problems of information
theory and communications.

\begin{theorem}\label{th5}
Let $a,b,\alpha>0.$ Then the following assertions are true:
\begin{enumerate}
\item[\bf (a)] $u\mapsto
u\widetilde{f}'_{a,b,\alpha}(u)/\widetilde{f}_{a,b,\alpha}(u)$ is
strictly decreasing on $(0,\infty);$
\item[\bf (b)] $u\mapsto
u{f}'_{a,b,\alpha}(u)/{f}_{a,b,\alpha}(u)$ is strictly decreasing on
$(0,\infty);$
\item[\bf (c)] $u\mapsto
u{F}'_{a,b,\alpha}(u)/{F}_{a,b,\alpha}(u)$ is strictly decreasing on
$(0,\infty);$
\item[\bf (d)] $u\mapsto
{F}'_{a,b,\alpha}(u)/{F}_{a,b,\alpha}(u)$ is strictly decreasing on
$(0,\infty).$
\end{enumerate}
\end{theorem}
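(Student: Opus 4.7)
The plan is to chain the four assertions together, each one being a short analytic step on top of the previous one, so that the only nontrivial inputs are Theorem \ref{th2}\textbf{(c)} and the density-to-distribution-function transfer of geometric concavity from \cite{bariczgeo} that is recalled just before the theorem statement.

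For \textbf{(a)} the plan is direct: since $\widetilde{f}_{a,b,\alpha}(u)$ equals a positive constant times $u^{a+b-2}K_{a-b}(u)$, logarithmic differentiation yields $u\widetilde{f}'_{a,b,\alpha}(u)/\widetilde{f}_{a,b,\alpha}(u)=(a+b-2)+uK'_{a-b}(u)/K_{a-b}(u)$, and by Theorem \ref{th2}\textbf{(c)} the right-hand side is strictly decreasing in $u$ on $(0,\infty)$ for every real order $a-b$. For \textbf{(b)} the plan is to exploit the defining identity $\widetilde{f}_{a,b,\alpha}(u)=f_{a,b,\alpha}(\phi(u))$ with $\phi(u)=\alpha u^2/(4ab)$; a chain-rule computation (using the identity $u\phi'(u)/\phi(u)=2$) gives $u\widetilde{f}'_{a,b,\alpha}(u)/\widetilde{f}_{a,b,\alpha}(u)=2\phi(u)f'_{a,b,\alpha}(\phi(u))/f_{a,b,\alpha}(\phi(u))$, and since $\phi$ is a strictly increasing bijection of $(0,\infty)$ onto itself, strict decrease in $u$ of the left-hand side, established in \textbf{(a)}, is equivalent to strict decrease in $v=\phi(u)$ of $v\mapsto vf'_{a,b,\alpha}(v)/f_{a,b,\alpha}(v)$, which is \textbf{(b)}.

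For \textbf{(c)} the plan is to observe that \textbf{(b)} says exactly that $f_{a,b,\alpha}$ is strictly geometrically concave on $(0,\infty)$, and then to apply the result of Baricz from \cite{bariczgeo} recalled in the paragraph preceding the theorem, which transfers strict geometric concavity from a probability density on $(0,\infty)$ to its cumulative distribution function; this immediately yields \textbf{(c)}. For \textbf{(d)} the plan is to factor $F'_{a,b,\alpha}(u)/F_{a,b,\alpha}(u)=(1/u)\cdot\bigl(uF'_{a,b,\alpha}(u)/F_{a,b,\alpha}(u)\bigr)$ into a product of two positive strictly decreasing functions on $(0,\infty)$, the first by inspection and the second by \textbf{(c)}, and to invoke the elementary fact that such a product is strictly decreasing; this is exactly the log-concavity of $F_{a,b,\alpha}$ and hence the statement in \textbf{(d)}.

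I anticipate no serious obstacle in this plan: the only genuinely analytic inputs are Theorem \ref{th2}\textbf{(c)} and the density-to-distribution-function transfer lemma of \cite{bariczgeo}, and all remaining steps are chain-rule identities or the elementary observation about products of positive decreasing functions. The one point that requires a moment of care is the chain-rule computation in \textbf{(b)}, where the specific quadratic form $\phi(u)=\alpha u^2/(4ab)$ is essential to produce the clean factor $2\phi(u)$ on the right so that the transition between the logarithmic derivative in $u$ and in $v$ is lossless.
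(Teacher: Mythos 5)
Your proposal is correct and follows essentially the same route as the paper: part \textbf{(a)} via Theorem \ref{th2}\textbf{(c)}, part \textbf{(b)} via the change of variable $u\mapsto \alpha u^2/(4ab)$ (the paper phrases this as a substitution in the geometric-concavity inequality, you phrase it equivalently via the chain rule and the identity $u\phi'(u)=2\phi(u)$), part \textbf{(c)} via the transfer result of \cite{bariczgeo}, and part \textbf{(d)} from geometric concavity plus monotonicity of $F_{a,b,\alpha}$ (the paper chains the AM--GM inequality through the increasing $F_{a,b,\alpha}$, you factor $F'/F=(1/u)\cdot(uF'/F)$ as a product of positive strictly decreasing functions -- the derivative-level version of the same fact). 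All steps check out.
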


\begin{proof}[\bf Proof]
{\bf (a)} From part {\bf (c)} of Theorem \ref{th2} we have that the
function
$$u\mapsto \frac{u\widetilde{f}'_{a,b,\alpha}(u)}{\widetilde{f}_{a,b,\alpha}(u)}=a+b-2+\frac{uK_{a-b}'(u)}{K_{a-b}(u)}$$
is strictly decreasing on $(0,\infty)$ for all $a,b,\alpha>0.$

{\bf (b)} Observe that part {\bf (a)} of this theorem actually means
that the function $\widetilde{f}_{a,b,\alpha}$ is strictly
geometrically concave, i.e. for all $a,b,\alpha>0,$
$\lambda\in(0,1)$ and $u_1,u_2>0,$ $u_1\neq u_2$ we have
$$\widetilde{f}_{a,b,\alpha}\left(u_1^{\lambda}u_2^{1-\lambda}\right)>
\left[\widetilde{f}_{a,b,\alpha}(u_1)\right]^{\lambda}\left[\widetilde{f}_{a,b,\alpha}(u_2)\right]^{1-\lambda}.$$
Now, changing in the above inequality $u_i$ with
$2\sqrt{abu_i/\alpha},$ where $i\in\{1,2\},$ we obtain
$${f}_{a,b,\alpha}\left(u_1^{\lambda}u_2^{1-\lambda}\right)>
\left[{f}_{a,b,\alpha}(u_1)\right]^{\lambda}\left[{f}_{a,b,\alpha}(u_2)\right]^{1-\lambda}$$
for all $a,b,\alpha>0,$ $\lambda\in(0,1)$ and $u_1,u_2>0,$ $u_1\neq
u_2.$ This means that the function $f_{a,b,\alpha}$ is strictly
geometrically concave and hence the function $u\mapsto
u{f}'_{a,b,\alpha}(u)/{f}_{a,b,\alpha}(u)$ is strictly decreasing on
$(0,\infty).$

{\bf (c)} This follows from part {\bf (b)} of this theorem. Namely,
it is known (see \cite{bariczgeo}) that if the probability density
function is strictly geometrically concave, then the corresponding
cumulative distribution function is also strictly geometrically
concave.

{\bf (d)} Part {\bf (c)} of this theorem states that the cumulative
distribution function $F_{a,b,\alpha}$ is strictly geometrically
concave. Now, by using the fact that $F_{a,b,\alpha},$ as a
distribution function, is increasing, for all $a,b,\alpha>0,$
$\lambda\in(0,1)$ and $u_1,u_2>0,$ $u_1\neq u_2$ we have
$${F}_{a,b,\alpha}\left(\lambda u_1+(1-\lambda)u_2\right)>{F}_{a,b,\alpha}\left(u_1^{\lambda}u_2^{1-\lambda}\right)>
\left[{F}_{a,b,\alpha}(u_1)\right]^{\lambda}\left[{F}_{a,b,\alpha}(u_2)\right]^{1-\lambda},$$
that is, $F_{a,b,\alpha}$ is strictly log-concave on $(0,\infty).$
\end{proof}



\section{Open Problems}

In this section our aim is to complement the results from the previous
sections and to present certain open problems, which may be of
interest for further research.

Recall that Neuman \cite{eneuman} proved that the modified Bessel
function $I_{\nu}$ is strictly log-convex on $(0,\infty)$ for all
$\nu\in(-1/2,0].$ Since $I_{-1/2}(u)=\sqrt{\pi/(2u)}\cosh u,$ we
conclude that in fact $I_{\nu}$ is strictly log-convex on
$(0,\infty)$ for all $\nu\in[-1/2,0].$ Thus, for all
$\nu\in[-1/2,0]$ and $u_1,u_2>0$ the third inequality in
\eqref{chain1} can be improved as follows
$$I_{\nu}\left(\sqrt{u_1u_2}\right)\leq
I_{\nu}\left(\frac{u_1+u_2}{2}\right)\leq
\sqrt{I_{\nu}(u_1)I_{\nu}(u_2)}.$$

Moreover, this implies that the function $I_{\nu}$ is strictly
$(p,q)-$convex on $(0,\infty)$ for all $\nu\in[-1/2,0],$ $p\leq1$
and $q\geq0.$ This can be verified by writing the function $u\mapsto
u^{1-p}I_{\nu}'(u)I_{\nu}^{q-1}(u)$ as a product of the functions
$u\mapsto I_{\nu}'(u)/I_{\nu}(u)$ and $u\mapsto
u^{1-p}I_{\nu}^q(u).$

Concerning Theorem \ref{th1} we have the following open problem.

\begin{open}
What can we say about the monotonicity of the functions $u\mapsto
uI_{\nu}'(u)/I_{\nu}^2(u)$ and $u\mapsto
u^2I_{\nu}'(u)/I_{\nu}^2(u)$ for $|\nu|<1$ and $\nu\in(-1,\nu_0),$
respectively? Is it true that $u\mapsto \sqrt{u}I_{\nu}(u)$ is
strictly log-concave on $(0,\infty)$ for all $\nu\geq 0$?
\end{open}

Now, concerning Theorem \ref{th2}, \ref{th1new} and \ref{th2new} we may ask the following.

\begin{open}
What can we say about the monotonicity of $u\mapsto
K_{\nu}'(u)/K_{\nu}^2(u)$ when $|\nu|<1$?
\end{open}

\begin{open}
What can we say about the $(p,q)-$convexity (concavity) of $I_{\nu}$
when $p\geq0,$ $q\in(-1,0)$? Moreover, the conditions for $\nu$ in
parts {\bf (b)}, {\bf (c)} and {\bf (d)} of Theorem \ref{th1new} can
be relaxed?
\end{open}

\begin{open}
What can we say about the $(p,q)-$convexity (concavity) of $K_{\nu}$
when $p\leq1,$ $q\in(0,1)$? Moreover, the conditions for $\nu$ in
parts {\bf (a)} and {\bf (c)} of Theorem \ref{th2new} can be
relaxed?
\end{open}

It is well-known that the function $\nu\mapsto K_{\nu}(u)$ is strictly log-convex on $\mathbb{R}$ for all $u>0$
fixed (see \cite{bariczstudia}). On the other hand $\nu\mapsto K_{\nu}(u)$ is strictly increasing on $(0,\infty)$ for all $u>0$
fixed. Clearly these imply that the function $\nu\mapsto K_{\nu}(u)$ is strictly $(p,q)-$convex on $(0,\infty)$ for all $p\leq 1$ and $q\geq 0,$ and all fixed $u>0.$
This suggest the following.

\begin{open}
What can we say about the $(p,q)-$convexity (concavity) of the
function $\nu\mapsto K_{\nu}(u)$ on $(0,\infty)$ when $p$ and $q$
are arbitrary real numbers?
\end{open}

Similarly, the function $\nu\mapsto I_{\nu}(u)$ is strictly log-concave on $(-1,\infty)$ for all $u>0$
fixed (see \cite{bariczstudia}). On the other hand $\nu\mapsto I_{\nu}(u)$ is strictly decreasing on $(-1,\infty)$ for all $u>0$
fixed. Clearly these imply that the function $\nu\mapsto I_{\nu}(u)$ is strictly $(p,q)-$concave on
$(0,\infty)$ for all $p\geq 1$ and $q\geq 0,$ and all fixed $u>0.$ Thus, it is natural to ask the following.

\begin{open}
What can we say about the $(p,q)-$convexity (concavity) of the
function $\nu\mapsto I_{\nu}(u)$ on $(0,\infty)$ when $p$ and $q$
are arbitrary real numbers? And what about the $(p,q)-$convexity
(concavity) of $\nu\mapsto I_{\nu}(u)$ on $(-1,\infty)$?
\end{open}

Due to Laforgia \cite{laforgia} it is known that
$K_{\nu}'(u)/K_{\nu}(u)\leq-\nu/u-1$ for all $u>0$ and
$\nu\in(0,1/2).$ First observe that the above inequality is valid
for all $\nu\in[0,1/2].$ Since $K_0'(u)=-K_1(u)$ for $\nu=0$ the
above inequality is equivalent to $K_1(u)>K_0(u),$ which is clearly
true, since the function $\nu\mapsto K_{\nu}(u)$ is strictly
increasing on $(0,\infty)$ for all $u>0$ fixed. Now, since
$K_{1/2}(u)=\sqrt{\pi/(2u)}e^{-u}$ we obtain that in Laforgia's
inequality for $\nu=1/2$ we have equality and since $\nu\mapsto
K_{\nu}(u)$ is even, we deduce that
$K_{\nu}'(u)/K_{\nu}(u)\leq-\nu/u-1$ holds true for all $u>0$ and
$|\nu|\leq 1/2,$ with equality for $\nu=1/2.$

By using this result we obtain that
\begin{align*}\frac{\left[u^2K_{\nu}'(u)\right]'}{K_{\nu}(u)}&=2\frac{uK_{\nu}'(u)}{K_{\nu}(u)}+\frac{u^2K_{\nu}''(u)}{K_{\nu}(u)}=
\left[\frac{uK_{\nu}'(u)}{K_{\nu}(u)}+u^2+\nu^2\right]\leq
u^2-u+\nu^2-\nu<0\end{align*} for all $u\in(0,1)$ and
$|\nu|\leq1/2.$ This implies that the function $u\mapsto
u^2K_{\nu}'(u)$ is strictly decreasing on $(0,1)$ for all
$|\nu|\leq1/2,$ i.e. the modified Bessel function of the second kind
$K_{\nu}$ is strictly $(-1,1)-$concave on $(0,1)$ for all
$|\nu|\leq1/2.$ This completes parts {\bf (e)} and {\bf (f)} of
Theorem \ref{th2}.

Taking into account the above discussion we may ask the following.

\begin{open}
Is it true that $u\mapsto u^2K_{\nu}'(u)$ is strictly decreasing on
$(0,2)$ for all $|\nu|\leq1/2$?
\end{open}

In reliability analysis it has been found very useful to classify
life distributions (i.e. distributions of which cumulative
distribution function satisfies $F(u)=0$ for $u\leq0$) according to
the monotonicity properties of the failure rate. By definition a
life distribution (with probability density function $f$ and
survival or reliability function $\overline{F}$) has the increasing
failure rate (IFR) property if the function $u\mapsto
f(u)/\overline{F}(u)$ is increasing on $(0,\infty).$ Since by
definition $\overline{F}(u)=1-F(u)$ for all $u>0,$ clearly we have
$\overline{F}'(u)=-f(u)$ for all $u>0.$ Thus, a life distribution is
IFR if and only if $u\mapsto -\overline{F}'(u)/\overline{F}(u)$ is
increasing on $(0,\infty),$ i.e. the reliability function
$\overline{F}$ is log-concave. It is well-known that if a
probability density function is log-concave then this implies that
the corresponding cumulative distribution function and the
complementary cumulative distribution function (or survival
function) have the same property (for more details see
\cite{andras,bagnoli,bariczgeo}). Another class of life
distributions is the NBU, which has been shown to be fundamental in
the study of replacement policies. By definition a life distribution
satisfies the new-is-better-than-used (NBU) property if $u\mapsto
\log\overline{F}(u)$ is sub-additive, i.e.
$$\overline{F}(u_1+u_2)\leq \overline{F}(u_1)\overline{F}(u_2)$$
for all $u_1,u_2>0.$ The corresponding concept of a new-is-worse-than-used (NWU) distribution is defined by reversing the above inequality. The NBU property
may be interpreted as stating that the chance $\overline{F}(u_1)$ that a new unit will survive to age $u_1$ is greater than the chance
$\overline{F}(u_1+u_2)/\overline{F}(u_2)$ that an unfailed unit of age $u_2$ will survive an additional time $u_1.$ It can be shown easily that if a life
distribution is IFR then it is NBU (see for example \cite{bariczgamma}), but the inverse implication in general does not hold. Since the most important
life distribution satisfies the NBU property it is natural to ask the following.

\begin{open}
Is it true that the gamma-gamma distribution satisfies the NBU property?
\end{open}

To answer this question it would be enough to prove that the probability density function $f_{a,b,\alpha}$ is log-concave, and for this in view of part
{\bf (b)} of Theorem \ref{th5} it is quite enough to show that $f_{a,b,\alpha}$ is increasing. Similarly, observe that for the log-concavity of
$f_{a,b,\alpha}$ we just need to show that $\widetilde{f}_{a,b,\alpha}$ is increasing and log-concave. However, by part {\bf (a)} of Theorem \ref{th5}
if $\widetilde{f}_{a,b,\alpha}$ is increasing, then it is log-concave. Thus, to prove that the gamma-gamma distribution is NBU we need to show that either
${f}_{a,b,\alpha}$ or $\widetilde{f}_{a,b,\alpha}$ is increasing.

\subsection*{Acknowledgments}

The research of \'Arp\'ad Baricz was supported by the J\'anos Bolyai
Research Scholarship of the Hungarian Academy of Sciences and by the
Romanian National Authority for Scientific Research CNCSIS-UEFISCSU,
project number PN-II-RU-PD\underline{ }388/2011.

\end{document}